\pgfplotsset{compat=newest}
\newtheorem{theorem}{Theorem}
\newtheorem{lemma}{Lemma}
\newtheorem{proposition}{Proposition}
\newtheorem{remark}{Remark}
\newtheorem{assumption}{Assumption}
\newcommand{\rme}{\mathrm{e}}
\newcommand{\rmi}{\mathrm{i}}
\newcommand{\RR}{\mathbb{R}}
\newcommand{\NN}{\mathbb{N}}
\newcommand{\norm}[1]{\lVert #1 \rVert}
\newcommand{\abs}[1]{\lvert #1 \rvert}
\begin{document}
\myfooter[L]{}
\begin{frontmatter}
\title{Exponential quadrature rules for
  problems with time-dependent fractional source} 
\author[1]{Marco Caliari}
\ead{marco.caliari@univr.it}

\author[1,2]{Fabio Cassini\corref{cor1}}
\ead{fabio.cassini@univr.it, cassini@altamatematica.it}

\cortext[cor1]{Corresponding author}

\affiliation[1]{organization={Department of Computer Science,
    University of Verona},addressline={Strada Le Grazie, 15},
  postcode={37134},
  city={Verona},
country={Italy}}

\affiliation[2]{organization={Istituto Nazionale di Alta Matematica},
  addressline={Piazzale Aldo Moro, 5},
  postcode={00185},
  city={Roma},
  country={Italy}}

\begin{abstract}
  In this manuscript, we propose newly-derived exponential
  quadrature rules for stiff linear
  differential equations with time-dependent fractional sources in the
  form $h(t^r)$, with $0<r<1$ and $h$ a sufficiently smooth function.
  To construct the methods,
  the source term is interpolated at $\nu$ collocation points by a suitable
  non-polynomial function, yielding to time marching schemes that we
  call Exponential Quadrature Rules for Fractional sources (EQRF$\nu$).
  We write the integrators in terms of special instances of the
  Mittag--Leffler functions that we call fractional $\varphi$ functions.
  We perform the error analysis of the schemes in the abstract framework
  of strongly continuous
  semigroups. Compared to classical exponential quadrature rules, which in our
  case of interest converge with order $1+r$ at most, we prove that
  the new methods may reach order  $1+\nu r$ for proper choices of
  the collocation points.
  Several numerical experiments demonstrate the theoretical findings
  and highlight the effectiveness of the approach.
\end{abstract}

\begin{keyword}
linear abstract ODEs \sep
stiff equations \sep
order reduction \sep
exponential quadrature \sep
fractional $\varphi$~functions
\end{keyword}
\end{frontmatter}

\section{Introduction}\label{sec:intro}
The numerical time integration of stiff differential equations requires careful
treatment due to stability issues that usually arise when employing standard
explicit time marching methods. In the last years, exponential integrators
proved to be a valuable way to effectively tackle the issue in many contexts.
We mention here, among the
others,~\cite{KT05,LPR19,CCEO24,CC24bis,C24,HL03,ETL17,KLS24} and the
seminal manuscript~\cite{HO10}. In this paper, we focus our attention to the
development of exponential integrators for linear ordinary differential
equations (ODEs) in the form
\begin{equation}\label{eq:ODE}
  \left\{
  \begin{aligned}
    y'(t) &= A y(t) + g(t) = A y(t) + h(t^r), \quad 0<r<1,\\
    y(0) &= y_0.
  \end{aligned}
  \right.
\end{equation}
Here $t\in[0,T]$, $y$ is the unknown function,
$A:\mathcal{D}(A)\subset X \rightarrow X$ is a linear operator on the
Banach space $(X,\norm{\cdot})$, and $g(t)=h(t^r)$ represents the
time-dependent fractional source term for the given parameter~$r$.
Results on the existence and uniqueness of solution
for this problem can be found, for instance, in~\cite[Section 4.2]{P83}.
Stiff equations in the form~\eqref{eq:ODE} typically arise when
considering linear partial differential equations (PDEs) as abstract
ODEs on suitable function spaces~\cite{P83},
or when dealing with systems of ODEs coming from the
semidiscretization in space of linear PDEs. A simple example is the
one-dimensional heat equation on the interval $\Omega=(0,1)$, coupled with
periodic or homogeneous Dirichlet boundary conditions, with heat source that
grows in time at a \emph{sublinear} rate $r$. In this case, the operator $A$
is the second-order differential operator $\partial_{xx}$, together with the
boundary conditions, or a suitable discretization thereof (e.g., by
standard finite differences).

Classical exponential integrators for the linear problem~\eqref{eq:ODE} are the
so-called \emph{exponential quadrature rules}~\cite{HO05}, the simplest
one being
\begin{equation}\label{eq:expquad1}
  y_{n+1} = \rme^{\tau A}y_n + \tau \varphi_1(\tau A)g(t_n+c_1\tau),
  \quad \tau\varphi_1(\tau A) = \int_0^\tau \rme^{(\tau-s)A}ds,
\end{equation}
with $c_1$ a collocation point in $[0,1]$. Here we introduced the time
discretization $t_n=n\tau$, $n=0,1,\ldots,N$,
with constant time step size $\tau$.
Under suitable assumptions (in particular $g'$ being absolutely integrable,
see~\cite[Theorem~1]{HO05}), the time marching method~\eqref{eq:expquad1} is
in general first-order convergent. The scheme is actually second-order
convergent if
$c_1=\frac{1}{2}$ and additional assumptions are verified, in particular $g''$
absolutely integrable (see~\cite[Theorem~2]{HO05}).
In the situation under investigation in this manuscript, however, the latter
is in general
not verified (since $0<r<1$), and therefore we expect an order reduction
when employing scheme~\eqref{eq:expquad1} with $c_1=\frac{1}{2}$.
This can be easily demonstrated numerically by considering the scalar ODE
\begin{equation}\label{eq:scalintro}
  \left\{
  \begin{aligned}
    y'(t) &= -y(t) + t^r, \quad t\in(0,T], \\
    y(0) &= 1.
  \end{aligned}
  \right.
\end{equation}
By performing the time marching with scheme~\eqref{eq:expquad1} up to $T=0.1$
for different number of time steps $N=2^j$ (with $j=2,4,6,8,10$), we
clearly see in
Figure~\ref{fig:motivating} that the order of convergence for the case $c_1=\frac{1}{2}$ drops
to $1.75$, $1.5$, and $1.25$ when $r=\frac{3}{4}$, $r=\frac{1}{2}$ and $r=\frac{1}{4}$, respectively.
On the other hand, as expected, the integrator is first-order convergent for
each $r$ when $c_1=0$.
\begin{figure}[!htb]
  \centering
%
%
\begin{tikzpicture}

\begin{axis}[%
width=1.5in,
height=1.7in,
at={(0.758in,0.481in)},
scale only axis,
xmode=log,
xmin=1,
xmax=10^4,
xminorticks=true,
xlabel={$N$},
ymode=log,
ymin=1e-9,
ymax=1e-1,
ylabel={Error},
yminorticks=true,
title = {$r=3/4$},
axis background/.style={fill=white},
legend style={legend cell align=left, align=left, draw=white!15!black, font=\footnotesize}
]
\addplot [color=red, only marks,line width=1pt, mark size = 3.5pt, mark=o, mark options={solid, red}]
  table[row sep=crcr]{%
4	0.00224043707277599\\
16	0.000538682773112642\\
64	0.000132553561268622\\
256	3.29362083076878e-05\\
1024	8.21521328153718e-06\\
};
\addlegendentry{$c_1=0$}
\addplot [color=blue, only marks, line width=1pt, mark size=3.5pt, mark=+, mark options={solid, blue}]
  table[row sep=crcr]{%
4	3.82072798039701e-05\\
16	4.37189925528435e-06\\
64	4.49335075702882e-07\\
256	4.36627700661774e-08\\
1024	4.106270656834e-09\\
};
\addlegendentry{$c_1=1/2$}

\addplot [color=black, line width=0.8pt]
  table[row sep=crcr]{%
4	0.00210309460007352\\
1024	8.21521328153718e-06\\
};
\addplot [color=black, dashed, line width=0.8pt]
  table[row sep=crcr]{%
4	6.72771384415681e-05\\
1024	4.106270656834e-09\\
};

\end{axis}

\end{tikzpicture}%
%
%
\begin{tikzpicture}

\begin{axis}[%
width=1.5in,
height=1.7in,
at={(0.758in,0.481in)},
scale only axis,
xmode=log,
xmin=1,
xmax=10^4,
xminorticks=true,
xlabel={$N$},
ymode=log,
ymin=1e-9,
ymax=1e-1,
ylabel={Error},
yminorticks=true,
title = {$r=1/2$},
axis background/.style={fill=white},
legend style={legend cell align=left, align=left, draw=white!15!black, font=\footnotesize}
]
\addplot [color=red, only marks,line width=1pt, mark size = 3.5pt, mark=o, mark options={solid, red}]
  table[row sep=crcr]{%
4	0.00440247209479816\\
16	0.0010149649272212\\
64.0000000000001	0.00024265085541908\\
256	5.92457900824295e-05\\
1024	1.4632200326381e-05\\
};
\addplot [color=blue, only marks, line width=1pt, mark size=3.5pt, mark=+, mark options={solid, blue}]
  table[row sep=crcr]{%
4	0.000170412312989754\\
16	2.42133651500165e-05\\
64.0000000000001	3.21328394037313e-06\\
256	4.13459853465525e-07\\
1024	5.24241506871803e-08\\
};

\addplot [color=black, line width=0.8pt]
  table[row sep=crcr]{%
4	0.00374584328355353\\
1024	1.4632200326381e-05\\
};
\addplot [color=black, dotted, line width=0.8pt]
  table[row sep=crcr]{%
4	0.000214729321214691\\
1024	5.24241506871803e-08\\
};

\end{axis}

\end{tikzpicture}%
%
%
\begin{tikzpicture}

\begin{axis}[%
width=1.5in,
height=1.7in,
at={(0.758in,0.481in)},
scale only axis,
xmode=log,
xmin=1,
xmax=10^4,
xminorticks=true,
xlabel={$N$},
ymode=log,
ymin=1e-9,
ymax=1e-1,
ylabel={Error},
yminorticks=true,
title = {$r=1/4$},
axis background/.style={fill=white},
legend style={legend cell align=left, align=left, draw=white!15!black, font=\footnotesize}
]
\addplot [color=red, only marks,line width=1pt, mark size = 3.5pt, mark=o, mark options={solid, red}]
  table[row sep=crcr]{%
4	0.00940141495069434\\
16	0.00213321556751167\\
64.0000000000001	0.000495790661841244\\
256	0.000117347152885916\\
1024	2.81708734776176e-05\\
};
\addplot [color=blue, only marks, line width=1pt, mark size=3.5pt, mark=+, mark options={solid, blue}]
  table[row sep=crcr]{%
4	0.000414915140317329\\
16	7.8219036289906e-05\\
64.0000000000001	1.41479786729404e-05\\
256	2.52176166060103e-06\\
1024	4.47114457124975e-07\\
};

\addplot [color=black, line width=0.8pt]
  table[row sep=crcr]{%
4	0.0072117436102701\\
1024	2.81708734776176e-05\\
};
\addplot [color=black, dashdotted, line width=0.8pt]
  table[row sep=crcr]{%
4	0.000457845204095974\\
1024	4.47114457124975e-07\\
};

\end{axis}

\end{tikzpicture}%
  \caption{Observed rate of convergence of the error
    $\lvert y_N-y(T)\rvert$, with different values of $r$, for
    problem~\eqref{eq:scalintro}. The time marching is performed with
    scheme~\eqref{eq:expquad1}.
    The slope of the solid line is $-1$,
    of the dashed line is
    $-1.75$, of the dotted line is $-1.5$,
    and of the dashdotted line is $-1.25$.}
  \label{fig:motivating}
\end{figure}
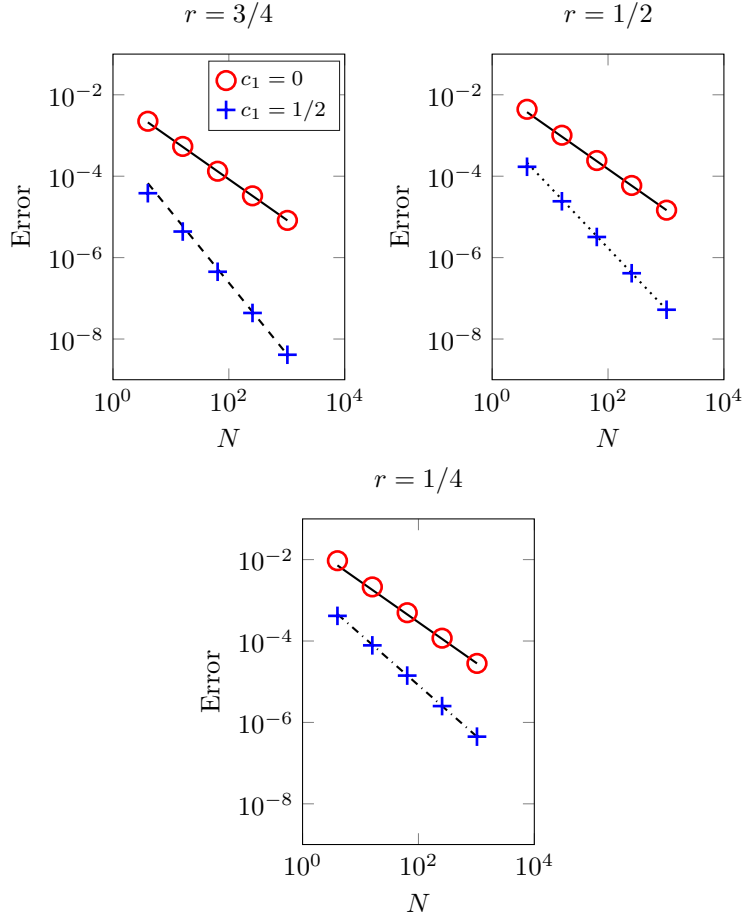
It is worth mentioning that the order reduction phenomenon of exponential
quadrature rules 
has already been studied in the literature in different situations.
We refer, e.g., to~\cite{CM18} in the context of linear equations
with time-dependent boundary conditions.

The simple numerical example just presented motivates the development and
the investigation of
exponential quadrature rules tailored for problems in the form~\eqref{eq:ODE}.
The remaining part of the manuscript is organized as follows.
In Section~\ref{sec:expquad}, the main one, we introduce the abstract framework
that we employ for the analysis and we present our proposed 
Exponential Quadrature Rules for Fractional sources with $\nu$ collocation
points (EQRF$\nu$).
The newly-derived methods are written in terms of linear combinations of 
special instances of the Mittag--Leffler functions, which we will call
fractional $\varphi$ functions.
We investigate in full details the integrators with one and two
non-confluent points which originate by interpolating the source term with a
non-polynomial basis.
The generalization of the procedure to $\nu$ collocation points is addressed
in Section~\ref{sec:nupoints}. In particular, if the points satisfy a peculiar
relation, we show in Theorem~\ref{thm:nupoints} that the proposed
integrators EQRF$\nu$ have convergence
order $1+\nu r$ (compared to classical quadrature rules which have order $1+r$).
We then proceed in Section~\ref{sec:numexp} by presenting several numerical
experiments that demonstrate the theoretical findings.
We finally draw the conclusions and discuss possible further
developments in Section~\ref{sec:conc}.
\section{The proposed exponential quadrature rules}\label{sec:expquad}
Classical
exponential quadrature rules originate from the expression of
the exact solution of problem~\eqref{eq:ODE} by means
of the so-called
\emph{variation-of-constants} formula
\begin{equation}\label{eq:voc}
  y(t_{n+1}) = \rme^{\tau A} y(t_n) + \int_0^\tau \rme^{(\tau-s)A}g((t_n+s))ds
             = \rme^{\tau A} y(t_n) + \int_0^\tau \rme^{(\tau-s)A}h((t_n+s)^r)ds,
\end{equation}
and by approximating the source term appearing in the integral with an
interpolation
\textit{polynomial} of a certain degree. The choice of the interpolation
points
(or equivalently the nodes of a quadrature rule) determines in fact the
numerical
method. For instance, by employing a single point (i.e.,
using the approximation $g(t_n+c_1\tau)\approx g(t_n+s)$)
we get scheme~\eqref{eq:expquad1}.
Under suitable assumptions, the employment of more terms in the polynomial
basis leads to different methods of higher order (see~\cite{HO05} for
a thorough presentation).
Our proposal is to embed the power $r$ directly into the interpolation
basis functions, that is considering $\{(t_n+s)^{jr}\}_{j=0}^{\nu-1}$.

As already mentioned in the introduction, for the analysis we will work
in an abstract
framework. A reader not familiar with the following formalism is invited to
consult, for instance, \cite{H81,P83}.
\begin{assumption}\label{assum:scs}
  Let $X$ be a Banach space with norm $\norm{\cdot}$.
  Let
$A \colon \mathcal{D}(A)\subset X \rightarrow X$ be
a linear operator.
We assume
that $A$ generates a strongly continuous
semigroup $\{\rme^{tA}\}_{t \geq 0}$.
\end{assumption}
Note that, in our cases of interest, $A$ is a differential operator (on
a suitable space) considered with boundary conditions
(e.g., $A=\rmi\partial_{xx}$ or
$A=\partial_{xx}$ with periodic or homogeneous Dirichlet boundary conditions).
Under Assumption~\ref{assum:scs}, the following bound on the semigroup
\begin{equation}\label{eq:expbound}
    \norm{\rme^{tA}} \leq C
\end{equation}
holds in the time interval $t\in[0,T]$.
Here and throughout the paper $C$ is a generic constant
that may have different values at different occurrences, but
it is independent of the stiffness of the problem.
Bound~\eqref{eq:expbound} directly implies that the
$\varphi_\ell$ functions
\begin{equation}\label{eq:phi}
  \varphi_0(z)=\rme^z,\quad 
  \varphi_\ell(z)=\frac{1}{(\ell-1)!}\int_0^1\rme^{(1-\theta)z}\theta^{\ell-1}d\theta=
  \sum_{k=0}^\infty\frac{z^k}{(\ell+k)!},\quad \ell\in\NN,
\end{equation}
are all bounded operators (that is, they satisfy
$\norm{\varphi_\ell(tA)} \leq C$).
We state now some lemmas needed for the following analysis.
\begin{lemma}\label{lem:h}
  Assume that $h\colon [0,+\infty)\to X$ is $k$ times differentiable,
    with bounded derivatives. Then, for $\sigma,\omega\ge0$, we have
  \begin{equation*}
    h(\sigma^r)=\sum_{j=0}^{k-1}
    \frac{h^{(j)}(\omega^r)}{j!}(\sigma^r-\omega^r)^j
    +\frac{h^{(k)}(\gamma_\sigma^r)}{k!}(\sigma^r-\omega^r)^k,
  \end{equation*}
where $\gamma_\sigma$ is between $\sigma$ and $\omega$.
\end{lemma}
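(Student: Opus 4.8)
The plan is to recognise the identity as nothing more than Taylor's theorem for $h$, applied after the change of variable that hides the fractional power. Concretely, set $a=\omega^r$ and $x=\sigma^r$; since $\sigma,\omega\ge 0$ and $0<r<1$, both $a$ and $x$ lie in $[0,+\infty)$, i.e.\ in the domain where $h$ is assumed $k$ times differentiable with bounded derivatives. Writing out the $k$-th order Taylor expansion of $h$ centred at $a$ and evaluated at $x$ gives
\[
  h(x)=\sum_{j=0}^{k-1}\frac{h^{(j)}(a)}{j!}(x-a)^j+\frac{h^{(k)}(\xi)}{k!}(x-a)^k,
\]
for a suitable $\xi$ lying between $a$ and $x$; substituting back $a=\omega^r$, $x=\sigma^r$ reproduces the polynomial part of the claimed formula verbatim.

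It then remains to rewrite the location of the remainder node. The map $t\mapsto t^{r}$ is a continuous, strictly increasing bijection of $[0,+\infty)$ onto itself, hence so is its inverse $t\mapsto t^{1/r}$. Since $\xi$ lies between $\omega^r$ and $\sigma^r$, the point $\gamma_\sigma:=\xi^{1/r}$ lies between $\omega$ and $\sigma$ and satisfies $\gamma_\sigma^r=\xi$, which is exactly the statement. The argument is insensitive to whether $\sigma\le\omega$ or $\sigma\ge\omega$, so no case distinction is needed.

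The only point requiring a little care is that $h$ takes values in the Banach space $X$, for which the pointwise (Lagrange) form of the Taylor remainder is not automatic, the mean value theorem being unavailable in general. I would handle this by invoking Taylor's theorem with the integral form of the remainder,
\[
  h(x)=\sum_{j=0}^{k-1}\frac{h^{(j)}(a)}{j!}(x-a)^j+\frac{(x-a)^k}{(k-1)!}\int_0^1(1-\theta)^{k-1}h^{(k)}\bigl(a+\theta(x-a)\bigr)\,d\theta,
\]
which is valid for $C^k$ maps into a Banach space by the fundamental theorem of calculus applied to the Bochner integral, and which immediately yields the bound $\norm{h(\sigma^r)-\sum_{j=0}^{k-1}\frac{h^{(j)}(\omega^r)}{j!}(\sigma^r-\omega^r)^j}\le\frac{1}{k!}\sup_{\tau\ge0}\norm{h^{(k)}(\tau)}\,\abs{\sigma^r-\omega^r}^k$ that is the only consequence actually used in the sequel; when $X$ is scalar- or finite-dimensional-valued (working componentwise), the displayed Lagrange form holds literally. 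I do not expect any genuine obstacle here beyond this standard bookkeeping.
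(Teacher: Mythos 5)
Your argument is essentially the paper's own proof: write the $k$-th order Taylor expansion of $h$ at $\omega^r$ evaluated at $\sigma^r$, and then use the strict monotonicity of $t\mapsto t^r$ on $[0,+\infty)$ to pull the intermediate point back to a $\gamma_\sigma$ between $\sigma$ and $\omega$ with $\gamma_\sigma^r$ equal to it. Your additional caveat about the Lagrange form of the remainder for Banach-space-valued $h$, with the integral-remainder fallback, is a legitimate refinement that the paper's one-line proof passes over silently, and it is harmless since only norm bounds of the remainder are used in the subsequent error estimates.
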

\begin{proof}
  We take $x,y\ge0$ and consider the Taylor expansion of $h(y)$ at $x$
  \begin{equation*}
    h(y)=\sum_{j=0}^{k-1}\frac{h^{(j)}(x)}{j!}(y-x)^j
    +\frac{h^{(k)}(\lambda_y)}{k!}(y-x)^k,
  \end{equation*}
  where $\lambda_y$ is between $y$ and $x$.
  Now, we choose
  $y=\sigma^r$ and $x=\omega^r$. Since  $\sigma^r$ is an increasing
  monotone function, we have guarantee of existence
  of $\gamma_\sigma$, between $\sigma$ and $\omega$,
  such that $\gamma_\sigma^r=\lambda_y$.
  This concludes the proof.
\end{proof}
\begin{lemma}\label{lem:hn}
  Let $t_n=n\tau$, for $n=1,\ldots,N$, with $\tau=T/N$. Then
  the following bounds hold
  \begin{equation*}
\sum_{j=1}^n t_j^{-p}<  \begin{cases}
    \tau^{-p}+C\tau^{-1},& \text{if $p>0$, $p\neq 1$},\\
    \tau^{-p}(C+\abs{\log\tau}), & \text{if $p=1$}.
    \end{cases}
  \end{equation*}
\end{lemma}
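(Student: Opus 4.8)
\emph{Proof sketch.} The idea is to strip off the step size, isolate the singular first term, and bound what is left by comparison with an integral.

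First I would write $t_j=j\tau$ and factor out the step size,
\[
 \sum_{j=1}^n t_j^{-p}=\tau^{-p}\sum_{j=1}^n j^{-p}
  =\tau^{-p}+\tau^{-p}\sum_{j=2}^n j^{-p},
\]
so that the term $\tau^{-p}$ appearing on the right-hand side in both cases of the statement is already accounted for, and only $\tau^{-p}\sum_{j=2}^n j^{-p}$ remains to be estimated.

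Next, exploiting that $x\mapsto x^{-p}$ is decreasing on $[1,+\infty)$ for $p>0$, I would bound each tail term by $j^{-p}\le\int_{j-1}^{j}x^{-p}\,dx$, hence $\sum_{j=2}^n j^{-p}\le\int_1^n x^{-p}\,dx$. For $p\neq1$ this integral equals $(n^{1-p}-1)/(1-p)$; invoking the key relation $n\le N=T/\tau$ to turn $n^{1-p}$ into a negative power of $\tau$, and multiplying through by $\tau^{-p}$, one is left precisely with a term of size $C\tau^{-1}$, which produces the first bound. For $p=1$ the integral equals $\log n\le\log(T/\tau)\le C+\abs{\log\tau}$, and multiplication by $\tau^{-1}$ gives the second bound.

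I do not expect a genuine obstacle here: the only delicate points are (i) making sure the exponent of $\tau$ emerging from $\tau^{-p}\cdot n^{1-p}$ is exactly $-1$, which is precisely where the bound $n\tau\le T$ is used, and (ii) treating the borderline exponent $p=1$ separately, where the antiderivative of $x^{-1}$ produces the logarithmic correction.
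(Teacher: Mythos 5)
Your overall strategy coincides with the paper's: factor out $\tau^{-p}$ and estimate the generalized harmonic number $\sum_{j=1}^n j^{-p}$. The paper quotes ready-made bounds on $H_n^{(p)}$ from a reference, while you re-derive them by the integral comparison test; that is a harmless, self-contained variant. For $0<p<1$ your computation is correct ($\tau^{-p}\int_1^n x^{-p}dx\le \tfrac{T^{1-p}}{1-p}\,\tau^{-1}$ after using $n\le N=T/\tau$), and so is the $p=1$ case ($\log n\le \log T+\abs{\log\tau}$).

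The gap is the case $p>1$, which the statement covers and which is actually used later (e.g.\ $p=2-r$, $3-2r$, $4-2r$ in Theorems~\ref{thm:expquadsp}--\ref{thm:2pg}). There $1-p<0$, so the "key relation" $n\le N$ acts in the wrong direction ($n^{1-p}\ge N^{1-p}$), and the integral $\frac{n^{1-p}-1}{1-p}=\frac{1-n^{1-p}}{p-1}\le\frac{1}{p-1}$ is simply a constant: multiplying by $\tau^{-p}$ yields $C\tau^{-p}$, not $C\tau^{-1}$. No rearrangement can repair this, because the claimed bound itself fails for $p>1$: with $p=2$ and $n=N$ the sum is $\tau^{-2}H_N^{(2)}\to\zeta(2)\,\tau^{-2}$, which is not below $\tau^{-2}+C\tau^{-1}$ as $\tau\to 0$. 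So your "$p\neq 1$" step silently assumes $p<1$ — and, to be fair, the paper's own proof has the same soft spot, since the cited estimate $H_n^{(p)}<1+Cn^{1-p}$ cannot hold with $C$ independent of $n$ when $p>1$. The bound one can prove for $p>1$, namely $\sum_{j=1}^n t_j^{-p}\le \tau^{-p}\bigl(1+\tfrac{1}{p-1}\bigr)=C\tau^{-p}$, is weaker but suffices everywhere the lemma is invoked (the sums are always multiplied by $\tau^3$, $\tau^4$ or $\tau^5$). Your proof should therefore split $p\neq 1$ into $p<1$ (your argument as written) and $p>1$ (constant bound on the harmonic sum), stating the conclusion for $p>1$ as $C\tau^{-p}$.
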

\begin{proof}
  We have
  \begin{equation*}
    \sum_{j=1}^nt_j^{-p}=\tau^{-p}\sum_{j=1}^n\frac{1}{j^p}=
    \tau^{-p}H_n^{(p)}\le     \tau^{-p}H_N^{(p)},
  \end{equation*}
where $H_n^{(p)}$ denotes the generalized harmonic number
\begin{equation*}
  H_n^{(p)}=\sum_{j=1}^n\frac{1}{j^p}.
  \end{equation*}
Then, by employing the following bounds
(see for instance~\cite[formulas (25)--(26)]{C09}).
\begin{equation*}
  H_n^{(p)}<
  \begin{cases}
    1+Cn^{1-p},& \text{if $p>0,\ p\neq 1$},\\
    1+\log n, & \text{if $p=1$}
    \end{cases}
\end{equation*}
the result follows immediately.
\end{proof}
\subsection{Exponential quadrature rule with one collocation
  point (EQRF1)}\label{sec:EQRF1}
Starting from the variation-of-constants formula~\eqref{eq:voc} and
considering $h((t_n+c_1\tau)^r)\approx h((t_n+s)^r)$, for $s\in[0,\tau]$,
we get the scheme
\begin{equation}\label{eq:cc1q}
  y_{n+1} = \rme^{\tau A}y_n + \tau \varphi_1(\tau A)\alpha_{0,n},
  \quad \alpha_{0,n}=h((t_n+c_1\tau)^r).
\end{equation}
Obviously this method (labeled EQRF1) coincides with the simplest classical quadrature rule
\eqref{eq:expquad1}.
We now prove the following.
\begin{theorem}\label{thm:expquadsp}
Let Assumption~\ref{assum:scs} be valid, and consider the time marching
scheme~\eqref{eq:cc1q} for problem~\eqref{eq:ODE}. Then, the following bounds
on the error $\epsilon_n=y(t_n)-y_n$
hold uniformly on $t_n\in[0,T]$:
\begin{enumerate}
  \item if $h$ is differentiable with bounded
  derivative, then $\norm{\epsilon_n}\leq C\tau$;
\item if  $h$ is
  twice differentiable with bounded derivatives,
  $h'\in\mathcal{D}(A)$, and $c_1=\frac{1}{2}$,
  then $\norm{\epsilon_n}\leq C\tau^{1+r}$.
\end{enumerate}
The constant $C$ may depend on the final time $T$,
but not on $n$.
\end{theorem}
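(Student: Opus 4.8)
\emph{Proof sketch.} The plan is a discrete variation-of-constants (Lady Windermere's fan) argument: derive a one-step error recursion, use the uniform bound~\eqref{eq:expbound} for stability, estimate the local error via Lemma~\ref{lem:h}, and sum the resulting bounds by Lemma~\ref{lem:hn}. Throughout, the very first time step must be isolated, since the per-step bounds carry negative powers of $t_n$.

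First I subtract the scheme~\eqref{eq:cc1q} from the variation-of-constants formula~\eqref{eq:voc}. Using that $\tau\varphi_1(\tau A)v=\int_0^\tau\rme^{(\tau-s)A}\,ds\,v$ for a constant $v\in X$, together with $\epsilon_0=0$, this gives $\epsilon_{n+1}=\rme^{\tau A}\epsilon_n+\delta_{n+1}$ with local error
\[
  \delta_{n+1}=\int_0^\tau\rme^{(\tau-s)A}\bigl[h((t_n+s)^r)-h((t_n+c_1\tau)^r)\bigr]\,ds ,
\]
hence $\epsilon_n=\sum_{k=1}^n\rme^{(n-k)\tau A}\delta_k$ and, by~\eqref{eq:expbound}, $\norm{\epsilon_n}\le C\sum_{k=1}^n\norm{\delta_k}$. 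For part~(i) I apply Lemma~\ref{lem:h} with $k=1$, $\sigma=t_n+s$, $\omega=t_n+c_1\tau$, so that the bracket equals $h'(\gamma^r)\bigl[(t_n+s)^r-(t_n+c_1\tau)^r\bigr]$. For the step starting at $t_0=0$ the scalar factor is bounded crudely by $s^r+(c_1\tau)^r\le C\tau^r$, giving $\norm{\delta_1}\le C\tau^{1+r}$; for a step starting at $t_n$ with $n\ge1$ the mean value theorem together with the monotonicity of $u\mapsto r u^{r-1}$ bounds it by $r t_n^{r-1}\tau$, giving $\norm{\delta_{n+1}}\le C\tau^2 t_n^{r-1}$. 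Summing with Lemma~\ref{lem:hn} ($p=1-r\in(0,1)$) then yields $\norm{\epsilon_n}\le C\tau^{1+r}+C\tau^2(\tau^{r-1}+C\tau^{-1})\le C\tau$.

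For part~(ii) I set $c_1=\tfrac12$, abbreviate $q(s)=(t_n+s)^r-(t_n+\tfrac{\tau}{2})^r$, and apply Lemma~\ref{lem:h} with $k=2$, splitting $\delta_{n+1}=\delta_{n+1}^a+\delta_{n+1}^b$, where $\delta_{n+1}^b=\tfrac12\int_0^\tau q(s)^2\rme^{(\tau-s)A}h''(\gamma_s^r)\,ds$ collects the second-order remainder and $\delta_{n+1}^a=\int_0^\tau q(s)\rme^{(\tau-s)A}h'((t_n+\tfrac{\tau}{2})^r)\,ds$ the first-order term. The term $\delta_{n+1}^b$ is estimated directly from $\norm{h''}\le C$, \eqref{eq:expbound} and $\abs{q(s)}\le C t_n^{r-1}\tau$ (resp.\ $\le C\tau^r$ for $n=0$), which gives $\norm{\delta_{n+1}^b}\le C\tau^3 t_n^{2r-2}$ (resp.\ $\le C\tau^{1+2r}$). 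For $\delta_{n+1}^a$ the decisive step is to gain one order by integration by parts in $s$: with $P(s)=\int_0^s q(\xi)\,d\xi$, using $\tfrac{d}{ds}\rme^{(\tau-s)A}v=-\rme^{(\tau-s)A}Av$ valid on $\mathcal{D}(A)$ and $h'((t_n+\tfrac{\tau}{2})^r)\in\mathcal{D}(A)$,
\[
  \delta_{n+1}^a=P(\tau)\,h'((t_n+\tfrac{\tau}{2})^r)+\int_0^\tau P(s)\,\rme^{(\tau-s)A}\,Ah'((t_n+\tfrac{\tau}{2})^r)\,ds .
\]
Since $\abs{P(s)}\le\int_0^\tau\abs{q}\le C\tau^2 t_n^{r-1}$, the second summand is $\le C\tau^3 t_n^{r-1}$ for $n\ge1$ (and $\le C\tau^{2+r}$ for $n=0$). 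For the first summand the choice $c_1=\tfrac12$ is what matters: Taylor-expanding $(t_n+\cdot)^r$ about $s=\tfrac{\tau}{2}$, the linear term integrates to zero over $[0,\tau]$, so $\abs{P(\tau)}=\abs{\int_0^\tau q(s)\,ds}$ reduces to the quadratic remainder and is bounded by $C t_n^{r-2}\tau^3$ for $n\ge1$, and by $\int_0^\tau(s^r+(\tfrac{\tau}{2})^r)\,ds\le C\tau^{1+r}$ for $n=0$.

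Collecting these bounds gives $\norm{\delta_1}\le C\tau^{1+r}$ and, for $n\ge1$, $\norm{\delta_{n+1}}\le C\tau^3\bigl(t_n^{r-2}+t_n^{r-1}+t_n^{2r-2}\bigr)$. Summing over $n$ with Lemma~\ref{lem:hn} (with $p=2-r\in(1,2)$, $p=1-r$, and $p=2-2r$, the last producing a harmless $\abs{\log\tau}$ exactly when $r=\tfrac12$) and using $\tau^2\le\tau^{1+r}$ for $r\le1$, every contribution is $O(\tau^{1+r})$, whence $\norm{\epsilon_n}\le C\tau^{1+r}$. I expect the main obstacle to be precisely the first summand of $\delta_{n+1}^a$: extracting the extra power of $\tau$ from the symmetric node $c_1=\tfrac12$ while keeping the singular weight $t_n^{r-2}$ summable, and — throughout — handling the initial step separately, where $t_0^{r-1}$ and $t_0^{r-2}$ are meaningless; the commutation of $A$ in $\delta_{n+1}^a$ is exactly what forces the hypothesis $h'\in\mathcal{D}(A)$ (with $Ah'$ bounded).
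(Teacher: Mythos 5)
Your proof is correct, and its overall architecture is the one the paper uses: error recursion plus the semigroup bound~\eqref{eq:expbound}, Lemma~\ref{lem:h} for the local error, separate treatment of the first step, the per-step bounds $C\tau^3(t_{j-1}^{r-1}+t_{j-1}^{r-2}+t_{j-1}^{2r-2})$, and summation via Lemma~\ref{lem:hn}. The two points where you deviate are minor but worth noting. For part (i) you give a direct estimate instead of invoking \cite[Theorem~1]{HO05} as the paper does; your argument is self-contained and, as a bonus, makes explicit that only integrability (not boundedness) of $t\mapsto t^{r-1}$ is needed near $t=0$. For part (ii) the paper exploits the midpoint cancellation through the exact identity $\int_0^\tau \rme^{(\tau-s)A}\left(s-\tfrac{\tau}{2}\right)ds\,h' = \tau^3\left(\varphi_3(\tau A)-\tfrac{1}{2}\varphi_2(\tau A)\right)Ah'$, whereas you integrate by parts in $s$ (using $\tfrac{d}{ds}\rme^{(\tau-s)A}v=-\rme^{(\tau-s)A}Av$ on $\mathcal{D}(A)$) and then use that $\int_0^\tau q(s)\,ds$ reduces to the quadratic Taylor remainder; these are two renderings of the same cancellation, both requiring $h'\in\mathcal{D}(A)$ with $Ah'$ bounded (an assumption the paper also uses implicitly), and they produce the same weights in $t_{j-1}$. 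The paper's formulation has the advantage of exhibiting the $\varphi$-function combination that reappears in the two- and $\nu$-point analyses (Theorems~\ref{thm:2pg} and~\ref{thm:nupoints}); yours is slightly more elementary and avoids the $\varphi$-recurrences altogether.
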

\begin{proof}
The first statement directly follows from~\cite[Theorem~1]{HO05}.
For the second one, setting $c_1=\frac{1}{2}$, we start by equivalently
writing scheme~\eqref{eq:expquad1}
as
\begin{equation*}
  y_{n+1} = \rme^{\tau A} y_n + \int_0^\tau\rme^{(\tau-s)A}
  h\left(\left(t_n+\frac{\tau}{2}\right)^r\right)ds.
\end{equation*}
By subtracting this expression from the variation-of-constants
formula~\eqref{eq:voc} we get
\begin{equation*}
  \epsilon_{n+1} = \rme^{\tau A}\epsilon_n + \delta_{n+1},
\end{equation*}
where we defined
\begin{equation*}
  \epsilon_n=y(t_n)-y_n \quad \text{and} \quad
  \delta_{n+1}= \int_0^\tau \rme^{(\tau-s) A}\left(h((t_n+s)^r)-h\left(\left(t_n+\frac{\tau}{2}\right)^r\right)\right)ds.
\end{equation*}
Since $\epsilon_0=0$, the recursion leads to
\begin{equation*}
  \epsilon_n = \sum_{j=0}^{n-1} \rme^{j\tau A}\delta_{n-j},
\end{equation*}
and by employing the bound on the semigroup~\eqref{eq:expbound} we get
\begin{equation*}
  \norm{\epsilon_n} \leq C\left(\norm{\delta_1}+
  \sum_{j=2}^n\norm{\delta_j}\right).
\end{equation*}
For the term $\delta_1$, by using Lemma~\ref{lem:h}, we get
\begin{equation*}
  \delta_1=\int_0^\tau \rme^{(\tau-s)A}
  \left(h(s^r)-h\left(\left(\frac{\tau}{2}\right)^r\right)\right)ds=
  \int_0^\tau \rme^{(\tau-s)A}
  h'(\gamma_s^r)\left(s^r-\left(\frac{\tau}{2}\right)^r\right)ds,
\end{equation*}
where $\gamma_s$ is between $s$ and $\frac{\tau}{2}$,
and therefore $\norm{\delta_1} \leq C\tau^{1+r}$.
For the remaining terms $\delta_j$, we have
\begin{multline*}
  h((t_{j-1}+s)^r) - h\left(\left(t_{j-1}+\frac{\tau}{2}\right)^r\right) = \\
  h'\left(\left(t_{j-1}+\frac{\tau}{2}\right)^r\right)
  \left((t_{j-1}+s)^r-\left(t_{j-1} + \frac{\tau}{2}\right)^r\right) +
  \frac{h''(\gamma_s^r)}{2}
  \left((t_{j-1}+s)^r-\left(t_{j-1} + \frac{\tau}{2}\right)^r\right)^2.
\end{multline*}
Since $t_{j-1}\ne 0$, by Taylor expansion
we have
\begin{equation*}
  (t_{j-1}+s)^r - \left(t_{j-1}+\frac{\tau}{2}\right)^r =
  r\left(t_{j-1}+\frac{\tau}{2}\right)^{r-1}\left(s-\frac{\tau}{2}\right)
  + \frac{r(r-1)}{2}\xi_{s,1}^{r-2}\left(s-\frac{\tau}{2}\right)^2,
\end{equation*}
and
\begin{equation*}
  \left((t_{j-1}+s)^r - \left(t_{j-1}+\frac{\tau}{2}\right)^r\right)^2 =
  r^2\xi_{s,2}^{2r-2}\left(s-\frac{\tau}{2}\right)^2,
\end{equation*}
with $\xi_{s,1}$ and $\xi_{s,2}$ between
$t_{j-1}+s$ and $t_{j-1}+\frac{\tau}{2}$.
Therefore, exploiting
the boundedness of the derivatives of $h$, the bound on the semigroup,
and the fact that $(t_{j-1}+s)^{r-1}\le t_{j-1}^{r-1}$,
we get
\begin{multline*}
  \norm{\delta_{j}} \leq
  C t_{j-1}^{r-1}\bigg\lVert\int_0^\tau \rme^{(\tau-s)A}\left(s-\frac{\tau}{2}\right)ds
    h'\left(\left(t_{j-1} + \frac{\tau}{2}\right)^r\right)\bigg\rVert\\
  + Ct_{j-1}^{r-2}\int_0^\tau\left(s-\frac{\tau}{2}\right)^2ds
 + Ct_{j-1}^{2r-2}\int_0^\tau \left(s-\frac{\tau}{2}\right)^2ds.
\end{multline*}
By exploiting that
\begin{equation*}
\int_0^\tau \rme^{(\tau-s)A}\left(s-\frac{\tau}{2}\right)dsh'\left(\left(t_{j-1} + \frac{\tau}{2}\right)^r\right) =
\tau^3\left(\varphi_3(\tau A)-\frac{1}{2}\varphi_2(\tau A)\right)Ah'\left(\left(t_{j-1} + \frac{\tau}{2}\right)^r\right),
\end{equation*}
we obtain
\begin{equation*}
  \norm{\delta_{j}} \leq C t_{j-1}^{r-1}\tau^3 + Ct_{j-1}^{r-2}\tau^3 + Ct_{j-1}^{2r-2}\tau^3,
  \end{equation*}
since by assumption $h'\in\mathcal{D}(A)$. Here, we also
exploited the boundedness of the $\varphi$ functions.
Finally, we conclude that
\begin{equation*}
  \begin{aligned}
    \norm{\epsilon_n} &\leq C\tau^{1+r} +
    C\tau^3 \left(\sum_{j=2}^{n}t_{j-1}^{r-1}+ \sum_{j=2}^{n}t_{j-1}^{r-2} 
    + \sum_{j=2}^{n}t_{j-1}^{2r-2}\right) \leq C\tau^{1+r},
  \end{aligned}
\end{equation*}
where we used the bounds in Lemma~\ref{lem:hn}.
\end{proof}
\subsection{Exponential quadrature rule with two collocation points
  (EQRF2)}\label{sec:EQRF2}
The classical exponential quadrature rules with two (or more) collocation
points still exhibit order reduction to $1+r$. This can be
easily proved by using similar reasoning as above (see
Section~\ref{sec:2pexact} for a numerical confirmation).
Here we employ the proposed approach to derive
exponential integrators of
order $1+2r$ using two collocation points. The generalization to $\nu$
points will be addressed in the next section.

We therefore consider the basis $\{1,(t_n+s)^r\}$ containing
two elements and perform the approximation
\begin{equation*}
  \alpha_{0,n} + (t_n+s)^r\alpha_{1,n}\approx h((t_n+s)^r).
  \quad s\in[0,\tau],
\end{equation*}
Here $\alpha_{0,n}$ and $\alpha_{1,n}$ are coefficients determined
by the imposing interpolation conditions on the
non-confluent collocation points $c_1$ and $c_2$ in $[0,1]$.
Simple calculations lead to
\begin{subequations}\label{eq:ccint_int}
\begin{equation}\label{eq:ccint_coeff}
  \alpha_{0,n} = h((t_n+c_1\tau)^r) - (t_n+c_1\tau)^r\alpha_{1,n}, \quad
  \alpha_{1,n} = \frac{h((t_n+c_2\tau)^r)-h((t_n+c_1\tau)^r)}{(t_n+c_2\tau)^r - (t_n+c_1\tau)^r}.
\end{equation}
Substituting the approximation into the variation-of-constants
formula~\eqref{eq:voc}, we obtain
\begin{equation}\label{eq:ccint_march}
  y_{n+1} = \rme^{\tau A} y_n + \tau \varphi_1(\tau A)\alpha_{0,n} +
  \int_0^\tau \rme^{(\tau-s) A}(t_n+s)^rds \alpha_{1,n}.
\end{equation}
\end{subequations}
The final form of the numerical scheme is obtained by employing a
generalization of the $\varphi$ functions appearing in classical exponential
exponential quadrature rules. More in detail, we avail of the following integral
formulation of the so-called Mittag--Leffler functions
\begin{equation}\label{eq:mittint}
  E_{1,\beta}(z) = \frac{1}{\Gamma(\beta-1)}\int_0^1 \rme^{(1-\theta) z}\theta^{\beta-2}d\theta,\quad \beta>1,
\end{equation}
where $\Gamma(\cdot)$ denotes Euler's gamma function
(see~\cite[Formula~1.100]{P99}). These
functions are particular cases of two-parameter Mittag--Leffler functions
$E_{\alpha,\beta}(z)$, 
typically encountered in the context of
fractional differential equations~\cite{P99,GP15,GP11}.
Note also that the Mittag--Leffler functions with $\alpha=1$ are related to
the so-called Miller--Ross functions. Comparing definitions~\eqref{eq:phi}
and \eqref{eq:mittint}, we clearly see that the Mittag--Leffler
functions $E_{1,\beta}$ generalize the $\varphi$ functions, and therefore
we choose to employ the notation
\begin{equation}\label{eq:philambda}
  \varphi_{\lambda}(z)=E_{1,1+\lambda}(z)=
  \frac{1}{\Gamma(\lambda)}\int_0^1\rme^{(1-\theta)z}\theta^{\lambda-1}d\theta,\quad \lambda\in\RR^+.
  \end{equation}
We call
these functions \emph{fractional} $\varphi$ functions.
Note that, as the classical $\varphi$ functions, they are bounded operators,
i.e., they satisfy $\norm{\varphi_{\lambda}(tA)}\leq C$ for $t\in[0,T]$.
We can now state the following.
\begin{proposition}\label{prop:inttomitt}
Let $\lambda\in\RR^+$. Then, we have
\begin{equation*}
  \int_0^\tau \rme^{(\tau-s) A}(t_n+s)^{\lambda-1}ds =
  \Gamma(\lambda)((t_n+\tau)^{\lambda}\varphi_{\lambda}((t_n+\tau)A)-t_n^{\lambda}\rme^{\tau A}\varphi_{\lambda}(t_nA)).
\end{equation*}
\end{proposition}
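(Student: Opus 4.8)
The plan is to reduce the stated identity to the single scalar--operator relation
\[
\Gamma(\lambda)\, z^{\lambda}\varphi_{\lambda}(zA)=\int_0^{z}\rme^{(z-\sigma)A}\sigma^{\lambda-1}\,d\sigma ,\qquad z\ge 0,
\]
which is nothing but a rescaling of the defining integral \eqref{eq:philambda}. First I would establish this relation: starting from \eqref{eq:philambda}, multiply by $\Gamma(\lambda)z^{\lambda}$ and change variables $\theta\mapsto\sigma=\theta z$, so that $d\theta=d\sigma/z$, $\theta^{\lambda-1}=\sigma^{\lambda-1}z^{1-\lambda}$ and $(1-\theta)z=z-\sigma$; this turns the integral over $[0,1]$ into the claimed integral over $[0,z]$. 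All integrals here are well defined as Bochner integrals, since $\sigma^{\lambda-1}$ is integrable near the origin because $\lambda>0$ (a weak singularity when $\lambda<1$), while $\rme^{(z-\sigma)A}$ is strongly continuous by Assumption~\ref{assum:scs} and uniformly bounded on $[0,T]$ by \eqref{eq:expbound}.

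Next I would apply this relation twice, with $z=t_n+\tau$ and with $z=t_n$. For the second instance, since $\rme^{\tau A}$ is a bounded operator it commutes with the Bochner integral, and the semigroup property $\rme^{\tau A}\rme^{(t_n-\sigma)A}=\rme^{(t_n+\tau-\sigma)A}$ (valid because $\tau\ge 0$ and $t_n-\sigma\ge 0$ for $\sigma\in[0,t_n]$) gives
\[
\Gamma(\lambda)\,t_n^{\lambda}\rme^{\tau A}\varphi_{\lambda}(t_nA)=\int_0^{t_n}\rme^{(t_n+\tau-\sigma)A}\sigma^{\lambda-1}\,d\sigma .
\]
Subtracting this from the $z=t_n+\tau$ identity collapses the two integrals into $\int_{t_n}^{t_n+\tau}\rme^{(t_n+\tau-\sigma)A}\sigma^{\lambda-1}\,d\sigma$, and the substitution $\sigma=t_n+s$ (so $t_n+\tau-\sigma=\tau-s$) transforms this into $\int_0^{\tau}\rme^{(\tau-s)A}(t_n+s)^{\lambda-1}\,ds$, which is exactly the left-hand side of the proposition.

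Finally I would observe that the degenerate case $t_n=0$ is covered automatically: then $t_n^{\lambda}=0$ because $\lambda>0$, and the formula reduces to the key relation with $z=\tau$. I do not expect a genuine obstacle here; the statement is essentially a change-of-variables bookkeeping identity once $\varphi_\lambda$ is unwound through \eqref{eq:philambda}. The only points deserving a line of justification are the convergence of the possibly weakly singular integral, guaranteed by $\lambda>0$, and the interchange of the bounded operator $\rme^{\tau A}$ with the integral together with the semigroup law on the nonnegative time interval, both of which are standard facts for strongly continuous semigroups.
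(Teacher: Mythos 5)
Your proposal is correct and is essentially the paper's own argument read in the opposite direction: the paper splits the left-hand integral as $\int_{-t_n}^\tau-\int_{-t_n}^0$, extracts $\rme^{\tau A}$ from the second piece by the semigroup law, and rescales each piece to the interval $[0,1]$ to recognize \eqref{eq:philambda}, which is exactly your scaled identity $\Gamma(\lambda)z^\lambda\varphi_\lambda(zA)=\int_0^z\rme^{(z-\sigma)A}\sigma^{\lambda-1}d\sigma$ applied at $z=t_n+\tau$ and $z=t_n$. Your added remarks on Bochner integrability of the weak singularity and the $t_n=0$ case are fine but not points of divergence from the paper's proof.
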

\begin{proof}
By splitting the integration interval into two parts we get
\begin{equation*}
  \int_0^\tau\rme^{(\tau-s) A}(t_n+s)^{\lambda-1}ds =
  \int_{-t_n}^\tau \rme^{(\tau-s) A}(t_n+s)^{\lambda-1}ds
  - \rme^{\tau A}\int_{-t_n}^0 \rme^{-sA}(t_n+s)^{\lambda-1}ds.
\end{equation*}
For the first integral we employ the change of variable
$(\tau-s) = (1-\theta)(t_n+\tau)$, while for the second one we perform the
substitution $-s = (1-\theta)t_n$. Then, the right hand side becomes
\begin{multline*}
   \int_0^1\rme^{(1-\theta)(t_n+\tau)A}((t_n+\tau)\theta)^{\lambda-1}(t_n+\tau)d\theta
  -\rme^{\tau A}\int_0^1 \rme^{(1-{\color{black}\theta})t_nA}(t_n\theta)^{\lambda-1}t_nd\theta \\
   = (t_n+\tau)^{\lambda}\int_0^1 \rme^{(1-\theta)(t_n+\tau)A}\theta^{\lambda-1} d\theta
  - t_n^{\lambda}\rme^{\tau A}\int_0^1 \rme^{(1-\theta)t_nA}\theta^{\lambda-1} d\theta.
\end{multline*}
The integral representation of the $\varphi_\lambda$
function~\eqref{eq:philambda} concludes the proof.
\end{proof}
By inserting the result of Proposition~\ref{prop:inttomitt} with
$\lambda=1+r$
into~\eqref{eq:ccint_march}, we finally get the proposed numerical scheme
EQRF2, i.e.,
\begin{multline}\label{eq:ccint_ml}
  y_{n+1} = \rme^{\tau A} y_n + \tau \varphi_1(\tau A)\alpha_{0,n}\\
  +\Gamma(1+r)((t_n+\tau)^{1+r}\varphi_{1+r}((t_n+\tau)A)-t_n^{1+r}\rme^{\tau A}\varphi_{1+r}(t_nA))\alpha_{1,n},
\end{multline}
where $\alpha_{0,n}$ and $\alpha_{1,n}$ are defined in
formula~\eqref{eq:ccint_coeff}.
Remark that, by construction, this integrator is exact on problems for which
$h(t^r)=t^rv$, being $v$ time-independent (see the numerical
experiments in Section~\ref{sec:2pexact}).
We now study the convergence of integrator~\eqref{eq:ccint_ml}.
\begin{theorem}\label{thm:2p}
  Let Assumption~\ref{assum:scs} be valid and assume that
  $h$ is twice differentiable with bounded derivatives.
  Consider the non-confluent points $c_1$ and $c_2$ in $[0,1]$.
Then, the following bound
{\color{black}
\begin{equation*}
  \norm{y(t_n)-y_n} \leq
  \begin{cases}
    C\tau^{\min\{1+2r,2\}} ,& \text{if $r\neq\frac{1}{2}$},\\
    C\tau^2(1+\lvert \log \tau \rvert), & \text{if $r=\frac{1}{2}$},
  \end{cases}
\end{equation*}
}
for the time marching
scheme~\eqref{eq:ccint_ml}
holds uniformly on $t_n\in[0,T]$.
The constant $C$ may depend on the final time $T$,
but not on $n$.
\end{theorem}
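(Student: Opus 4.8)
The plan is to mirror the argument used for Theorem~\ref{thm:expquadsp}, exploiting the fact that the interpolation underlying EQRF2 is now exact on the two-dimensional space spanned by $\{1,(t_n+s)^r\}$. First I would rewrite scheme~\eqref{eq:ccint_ml} back in integral form: by Proposition~\ref{prop:inttomitt} (with $\lambda=1+r$) and~\eqref{eq:ccint_march}, the scheme equals $y_{n+1}=\rme^{\tau A}y_n+\int_0^\tau\rme^{(\tau-s)A}p_n(s)\,ds$, where $p_n(s)=\alpha_{0,n}+(t_n+s)^r\alpha_{1,n}$ is the non-polynomial interpolant of $s\mapsto h((t_n+s)^r)$ at the nodes $c_1\tau$ and $c_2\tau$, with $\alpha_{0,n},\alpha_{1,n}$ given by~\eqref{eq:ccint_coeff}. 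Subtracting this from the variation-of-constants formula~\eqref{eq:voc} gives the error recursion $\epsilon_{n+1}=\rme^{\tau A}\epsilon_n+\delta_{n+1}$ with local defect $\delta_{n+1}=\int_0^\tau\rme^{(\tau-s)A}\big(h((t_n+s)^r)-p_n(s)\big)\,ds$. Since $\epsilon_0=0$, unrolling the recursion and using the semigroup bound~\eqref{eq:expbound} reduces the task to estimating $\norm{\delta_1}$ and $\sum_{j=2}^n\norm{\delta_j}$, exactly as in the proof of Theorem~\ref{thm:expquadsp}.

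The core of the proof is the estimate of the interpolation error $h((t_n+s)^r)-p_n(s)$. Applying Lemma~\ref{lem:h} with $k=2$ twice — once with $\sigma=t_n+s$, $\omega=t_n+c_1\tau$, and once with $\sigma=t_n+c_2\tau$, $\omega=t_n+c_1\tau$, the latter to rewrite $\alpha_{1,n}=h'((t_n+c_1\tau)^r)+\frac{1}{2}h''(\tilde\eta^r)\big((t_n+c_2\tau)^r-(t_n+c_1\tau)^r\big)$ — one obtains a representation of the form
\begin{multline*}
  h((t_n+s)^r)-p_n(s)=\frac{1}{2} h''(\eta_s^r)\big((t_n+s)^r-(t_n+c_1\tau)^r\big)^2\\
  -\frac{1}{2} h''(\tilde\eta^r)\big((t_n+c_2\tau)^r-(t_n+c_1\tau)^r\big)\big((t_n+s)^r-(t_n+c_1\tau)^r\big),
\end{multline*}
i.e. the defect is a bounded multiple of a product of \emph{two} fractional increments. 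This is precisely why no $\mathcal{D}(A)$ hypothesis is needed (in contrast with Theorem~\ref{thm:expquadsp}, part~2): matching $h$ exactly on $\{1,(t_n+\cdot)^r\}$ eliminates the term that would otherwise be merely linear in the fractional increment and whose integral against $\rme^{(\tau-s)A}$ is small only after applying $A$. For $j\ge 2$ we have $t_{j-1}\neq 0$, so a Taylor expansion gives $(t_{j-1}+s)^r-(t_{j-1}+c_i\tau)^r=r\,\xi^{r-1}(s-c_i\tau)$ with $\xi\ge t_{j-1}$, hence $\abs{(t_{j-1}+s)^r-(t_{j-1}+c_i\tau)^r}\le C t_{j-1}^{r-1}\tau$ using $r-1<0$; the boundedness of $h''$, the semigroup bound, and integration over $s\in[0,\tau]$ then yield $\norm{\delta_j}\le C t_{j-1}^{2r-2}\tau^3$. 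For the first defect $\delta_1$ (where $t_0=0$) I would instead bound the two fractional increments crudely by $\abs{s^r-(c_i\tau)^r}\le \tau^r$, obtaining $\norm{\delta_1}\le C\tau^{1+2r}$.

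Collecting the estimates gives $\norm{\epsilon_n}\le C\tau^{1+2r}+C\tau^3\sum_{j=1}^{n-1}t_j^{2r-2}$, and the proof is finished by invoking Lemma~\ref{lem:hn} with $p=2-2r\in(0,2)$. If $r\neq\frac{1}{2}$ then $p\neq 1$, so $\sum_{j=1}^{n-1}t_j^{2r-2}<\tau^{2r-2}+C\tau^{-1}$, which produces $\norm{\epsilon_n}\le C(\tau^{1+2r}+\tau^2)\le C\tau^{\min\{1+2r,2\}}$; if $r=\frac{1}{2}$ then $p=1$ and the generalized harmonic sum contributes the logarithmic factor, giving $\norm{\epsilon_n}\le C\tau^2(1+\abs{\log\tau})$. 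The constant depends only on $T$, the bounds on the derivatives of $h$, the node positions, and $r$, not on $n$.

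I expect the main obstacle — more a matter of bookkeeping than a genuine difficulty — to be twofold: handling the starting step $n=1$ separately, where $t_0=0$ makes the $t_{j-1}^{r-1}$ factors unavailable and one must fall back on the $\tau^r$ bound for the fractional increments; and carefully tracking exponents so that the three regimes $r<\frac{1}{2}$, $r=\frac{1}{2}$, $r>\frac{1}{2}$, including the borderline logarithmic case, come out of Lemma~\ref{lem:hn} exactly as in the statement.
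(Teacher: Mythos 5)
Your proposal is correct and follows essentially the same route as the paper: the same error recursion, the same reduction to bounding $\norm{\delta_1}\le C\tau^{1+2r}$ and $\norm{\delta_j}\le C\tau^3 t_{j-1}^{2r-2}$ for $j\ge 2$, and the same conclusion via Lemma~\ref{lem:hn} with $p=2-2r$, including the logarithmic borderline $r=\tfrac12$. The only (immaterial) difference is cosmetic: the paper writes the defect as $\bigl((t_{j-1}+s)^r-(t_{j-1}+c_1\tau)^r\bigr)\bigl(h'(\gamma_s^r)-h'(\eta^r)\bigr)$ and invokes the Lipschitz continuity of $h'$, whereas you expand to second order and carry the $h''$ terms explicitly (which is the representation the paper itself uses later for Theorem~\ref{thm:2pg}); both rest on the boundedness of $h''$ and give identical estimates.
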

\begin{proof}
By comparing the exact solution expressed by means of the variation-of-constants
formula~\eqref{eq:voc} and the numerical one (see also formula~\eqref{eq:ccint_int}),
we get the recursion
\begin{equation*}
  \epsilon_{n+1} = \rme^{\tau A}\epsilon_n + \delta_{n+1},
\end{equation*}
where
\begin{equation*}
  \epsilon_{n} = y(t_n)-y_n \quad \mathrm{and} \quad
  \delta_{n+1} = \int_0^\tau \rme^{(\tau-s)A}\rho_n(s)ds,
\end{equation*}
with
\begin{equation}\label{eq:rhon}
  \rho_n(s)=h((t_n+s)^r)- (\alpha_{0,n} + (t_n+s)^r \alpha_{1,n}).
\end{equation}
Since $\epsilon_0=0$, similarly to the proof of Theorem~\ref{thm:expquadsp},
we get
\begin{equation*}
  \norm{\epsilon_n} \leq C\left(\norm{\delta_1} +
  \sum_{j=2}^{n}\norm{\delta_{j}}\right).
\end{equation*}
Expanding $h((t_{j-1}+s)^r)$ around $t_{j-1}+c_1\tau$ according to
Lemma~\ref{lem:h}, and doing simple algebraic manipulations,
we get
\begin{equation*}
    \rho_{j-1}(s)=
    ((t_{j-1}+s)^r - (t_{j-1}+c_1\tau)^r)\left(h'(\gamma_s^r)-
    \frac{h((t_{j-1}+c_2\tau)^r)-h((t_{j-1}+c_1\tau)^r)}{(t_{j-1}+c_2\tau)^r-(t_{j-1}+c_1\tau)^r}\right),
\end{equation*}
where $\gamma_s$ is between $t_{j-1}+c_1\tau$ and $t_{j-1}+s$.
Expanding now $h((t_{j-1}+c_2\tau)^r)$ around $t_{j-1}+c_1\tau$ we obtain
\begin{equation*}
  \rho_{j-1}(s)=
  ((t_{j-1}+s)^r - (t_{j-1}+c_1\tau)^r)(h'(\gamma_{s}^r)-h'(\eta^r)),
\end{equation*}
where $\eta$ is between $t_{j-1}+c_1\tau$ and $t_{j-1}+c_2\tau$.
Then, by using the bound on the semigroup~\eqref{eq:expbound}
and the fact that $h''$ is bounded
(i.e., $h'$ is Lipschitz), we get
\begin{equation*}
  \begin{aligned}
    \norm{\delta_{j}} &\leq C\int_0^\tau \norm{\rho_{j-1}(s)}ds
    \leq C \int_0^\tau \lvert  (t_{j-1}+s)^r-(t_{j-1}+c_1\tau)^r \rvert \lvert \gamma_s^r - \eta^r \rvert ds\\
    &\leq C\tau \max_{0\leq s \leq \tau}\lvert (t_{j-1}+s)^r-(t_{j-1}+c_1\tau)^r\rvert
    \max_{0\leq s \leq \tau} \lvert \gamma_s^r - \eta^r \rvert
    \leq C\tau ((t_{j-1}+\tau)^r-t_{j-1}^r)^2.
  \end{aligned}
\end{equation*}
Therefore $\norm{\delta_1}\leq C\tau^{1+2r}$. For the remaining $\delta_j$, by
Taylor expansion of $(t_{j-1}+\tau)^r$ around $t_{j-1}$ we get
\begin{equation*}
  \norm{\delta_{j}} \leq C\tau^3t_{j-1}^{2r-2}.
\end{equation*}
Finally, using Lemma~\ref{lem:hn}, we conclude that
{\color{black}
\begin{equation*}
    \norm{\epsilon_n} \leq C\tau^{1+2r}+C\tau^3\sum_{j=2}^{n}t_{j-1}^{2r-2}
    \leq
    \begin{cases}
      C\tau^{1+2r} + C\tau^2 ,& \text{if $r\neq\frac{1}{2}$},\\
      C\tau^2 + C\tau^2(1+\lvert \log \tau \rvert), & \text{if $r=\frac{1}{2}$},
  \end{cases}
\end{equation*}
which yields the result}.
\end{proof}

The rate of convergence can actually be improved if we impose additional
conditions on the collocation points and smoothness assumptions.
\begin{theorem}\label{thm:2pg}
  Let Assumption~\ref{assum:scs} be valid and assume that
  $h$ is three times differentiable with bounded derivatives.
  In addition, assume that $h''$ lies in $\mathcal{D}(A)$
  and that the non-confluent points $c_1$ and $c_2$ in $[0,1]$ satisfy
  the relation
\begin{equation}\label{eq:c1c2relation}
  \frac{1}{3} - \frac{1}{2}(c_1+c_2) + c_1c_2 = 0.
\end{equation}
  Then, the following bound
\begin{equation*}
  \norm{y(t_n)-y_n} \leq C\tau^{1+2r}
\end{equation*}
for the time marching
scheme~\eqref{eq:ccint_ml}
holds uniformly on $t_n\in[0,T]$.
The constant $C$ may depend on the final time $T$,
but not on $n$.
\end{theorem}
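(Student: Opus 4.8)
The plan is to rerun the recursion from the proof of Theorem~\ref{thm:2p}, namely $\epsilon_{n+1}=\rme^{\tau A}\epsilon_n+\delta_{n+1}$ with $\delta_{n+1}=\int_0^\tau\rme^{(\tau-s)A}\rho_n(s)\,ds$ and $\rho_n$ as in~\eqref{eq:rhon}, so that $\norm{\epsilon_n}\le C\bigl(\norm{\delta_1}+\sum_{j=2}^n\norm{\delta_j}\bigr)$, but now to extract one additional power of $\tau$ from the local defects $\delta_j$ with $j\ge 2$. The crucial observation is that $\rho_n(s)$ is exactly the error of linear interpolation of $h$ \emph{in the variable} $u=(t_n+s)^r$ at the two nodes $u_i=(t_n+c_i\tau)^r$; writing the remainder in divided-difference form,
\[
  \rho_{j-1}(s)=\bigl((t_{j-1}+s)^r-(t_{j-1}+c_1\tau)^r\bigr)\bigl((t_{j-1}+s)^r-(t_{j-1}+c_2\tau)^r\bigr)\,h\bigl[u_1,u_2,(t_{j-1}+s)^r\bigr],
\]
where the divided difference is a bounded, \emph{measurable} function of $s$ (here I would take care to use the integral form of the remainder, so that all the $s$-integrals below are legitimate), equal to $\tfrac12 h''$ at some point of $[t_{j-1}^{\,r},(t_{j-1}+\tau)^r]$. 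For $\delta_1$ the crude estimate $|s^r-(c_i\tau)^r|\le\tau^r$ already gives $\norm{\delta_1}\le C\tau^{1+2r}$, which is exactly the target order.

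For $j\ge 2$ I would first peel off the constant value $v_{j-1}:=\tfrac12 h''\bigl((t_{j-1}+c_1\tau)^r\bigr)$ from the divided difference: since $h'''$ is bounded, $h[u_1,u_2,(t_{j-1}+s)^r]=v_{j-1}+R_{j-1}(s)$ with $|R_{j-1}(s)|\le C\tau\,t_{j-1}^{r-1}$, giving $\delta_j=\delta_j^{(1)}+\delta_j^{(2)}$. In the remainder piece $\delta_j^{(2)}$ the two interpolation factors are each $O(\tau\,t_{j-1}^{r-1})$, so (using only $\norm{\rme^{(\tau-s)A}}\le C$) one gets $\norm{\delta_j^{(2)}}\le C\tau^4 t_{j-1}^{3r-3}$. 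In the constant piece I would Taylor-expand, using $t_{j-1}\ge\tau$, the product of interpolation factors as
\[
  q_{j-1}(s):=\bigl((t_{j-1}+s)^r-(t_{j-1}+c_1\tau)^r\bigr)\bigl((t_{j-1}+s)^r-(t_{j-1}+c_2\tau)^r\bigr)=r^2 t_{j-1}^{2r-2}(s-c_1\tau)(s-c_2\tau)+E_{j-1}(s),
\]
with $|E_{j-1}(s)|\le C\,t_{j-1}^{2r-3}\tau^3$; the $E_{j-1}$-contribution to $\delta_j^{(1)}$ is bounded by $C\tau^4 t_{j-1}^{2r-3}$ trivially. The genuinely new step concerns the leading term: after the substitution $s=\tau\sigma$,
\[
  \int_0^\tau\rme^{(\tau-s)A}(s-c_1\tau)(s-c_2\tau)\,ds=\tau^3\int_0^1\rme^{\tau(1-\sigma)A}(\sigma-c_1)(\sigma-c_2)\,d\sigma,
\]
and inserting $\rme^{\tau(1-\sigma)A}=I+\tau(1-\sigma)A\varphi_1(\tau(1-\sigma)A)$ the part not involving $A$ contributes the factor $\int_0^1(\sigma-c_1)(\sigma-c_2)\,d\sigma=\tfrac13-\tfrac12(c_1+c_2)+c_1c_2$, which \emph{vanishes} precisely by hypothesis~\eqref{eq:c1c2relation}. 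The surviving term is $\tau$ times an operator which, applied to $v_{j-1}\in\mathcal D(A)$, stays bounded (here the assumptions $h''\in\mathcal D(A)$ and the boundedness of the $\varphi$ functions are used), so the leading part of $\delta_j^{(1)}$ is $O(\tau^4 t_{j-1}^{2r-2})$. Altogether $\norm{\delta_j}\le C\tau^4\bigl(t_{j-1}^{2r-2}+t_{j-1}^{2r-3}+t_{j-1}^{3r-3}\bigr)$ for $j\ge 2$.

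Finally I would sum over $j$ and apply Lemma~\ref{lem:hn}, checking that $\tau^4\sum_{j=2}^n t_{j-1}^{-p}\le C\tau^{1+2r}$ for each of the three exponents $p\in\{2-2r,\,3-2r,\,3-3r\}$: when $p>1$ the sum is dominated by $\tau^{-p}$ and $4-p\ge 1+2r$ holds in all three cases (the exponent $p=3-2r$, which is always $>1$ here, being the binding one and giving exactly $\tau^{1+2r}$); when $p<1$ the contribution $\tau^4\cdot\tau^{-1}=\tau^3\le\tau^{1+2r}$ dominates; and the borderline cases $p=1$ (occurring at $r=\tfrac12$ for the first exponent and at $r=\tfrac23$ for the third) only produce an extra factor $1+\abs{\log\tau}$, which is absorbed since $\tau^{2-2r}\abs{\log\tau}\to0$. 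Combining with $\norm{\delta_1}\le C\tau^{1+2r}$ yields $\norm{\epsilon_n}\le C\tau^{1+2r}$ uniformly in $n$. I expect the main obstacle to be the bookkeeping in this last paragraph — systematically using $t_{j-1}\ge\tau$ to consolidate the mixed powers of $\tau$ and $t_{j-1}$ and absorbing the logarithmic borderline exponents — together with the routine care needed to write the interpolation remainder in a form for which the $s$-integrals defining $\delta_j$ are well defined.
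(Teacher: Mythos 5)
Your proposal is correct and follows essentially the same route as the paper's proof: the same recursion and bound $\norm{\delta_1}\le C\tau^{1+2r}$, the same splitting of the local defect for $j\ge 2$ into a piece controlled by the Lipschitz continuity of $h''$ (of size $\tau^4 t_{j-1}^{3r-3}$) and a piece with $h''$ frozen whose leading $\tau^3$ contribution vanishes exactly by the node relation~\eqref{eq:c1c2relation}, with the surviving term handled through $h''\in\mathcal{D}(A)$ and the boundedness of the $\varphi$ functions, and the same final summation via Lemma~\ref{lem:hn}. Your divided-difference form of the interpolation remainder, the expansion of the nodal factors around $t_{j-1}$, and the identity $\rme^{z}=1+z\varphi_1(z)$ are only cosmetic variants of the paper's double Taylor expansion and its explicit $\varphi_2,\varphi_3,\varphi_4$ identity.
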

\begin{proof}
  By comparing the exact solution with the numerical one, from
  the recursion on $\epsilon_n = y(t_n)-y_n$
we get
\begin{equation*}
  \epsilon_n = \sum_{j=0}^{n-1}\rme^{j\tau A}\delta_{n-j},\quad
  \delta_{n+1} = \int_0^\tau \rme^{(\tau-s)A}\rho_n(s)ds,
\end{equation*}
where
$\rho_n(s)$ is defined in formula~\eqref{eq:rhon}.
As in the proof of Theorem~\ref{thm:2p},
 $\norm{\delta_1}\le C\tau^{1+2r}$. Consider now $j>1$.
Employing {\color{black}Lemma~\ref{lem:h}} and expanding the function
$h$ around $t_{j-1}+c_1\tau$ up to the second derivative, we get
\begin{multline*}
  \rho_{j-1}(s) = -\frac{h''(\eta^r)}{2}((t_{j-1}+c_2\tau)^r-(t_{j-1}+c_1\tau)^r)((t_{j-1}+s)^r-(t_{j-1}+c_1\tau)^r)\\
  +\frac{h''(\gamma_s^r)}{2}((t_{j-1}+s)^r-(t_{j-1}+c_1\tau)^r)^2,
\end{multline*}
where $\eta$ is between $t_{j-1}+c_1\tau$ and $t_{j-1}+c_2\tau$, while
$\gamma_s$ is between $t_{j-1}+c_1\tau$ and $t_{j-1}+s$.
Now, by writing $(t_{j-1}+c_2\tau)^r-(t_{j-1}+c_1\tau)^r$ as
$\left((t_{j-1}+s)^r-(t_{j-1}+c_1\tau)^r\right)+\left((t_{j-1}+c_2\tau)^r-(t_{j-1}{\color{black}+}s)^r\right)$,
we obtain the following expression for $\delta_j$
\begin{equation*}
  \begin{aligned}
  \delta_{j} &= \int_0^\tau \rme^{(\tau-s)A}\rho_{j-1}(s)ds\\
  &=
  \underbrace{\int_0^\tau \rme^{(\tau-s)A}
  ((t_{j-1}+s)^r-(t_{j-1}+c_1\tau)^r)^2\left(\frac{h''(\gamma_s^r)}{2}-\frac{h''(\eta^r)}{2}\right)ds}_{\psi_j}\\
  &\phantom{=}+ \underbrace{\int_0^\tau \rme^{(\tau-s)A}((t_{j-1}+s)^r-(t_{j-1}+c_1\tau)^r)((t_{j-1}+s)^r-(t_{j-1}+c_2\tau)^r)ds\frac{h''(\eta^r)}{2}}_{\omega_j}.
  \end{aligned}
\end{equation*}
Then, for the first integral term $\psi_j$ we have
\begin{equation*}
    \norm{\psi_j} \leq C\int_0^\tau ((t_{j-1}+s)^r-(t_{j-1}+c_1\tau)^r)^2
    \norm{h''(\gamma_s^r)-h''(\eta^r)}ds\leq C\tau^4t_{j-1}^{3r-3}.
\end{equation*}
Here{\color{black}, similarly to the proof of the previous theorem, we used the fact that $h''$ is Lipschitz and the Taylor expansion of $(t_{j-1}+\tau)^r$ around
$t_{j-1}$.}
For the second integral term $\omega_j$, using again Taylor expansions, we get
\begin{multline*}
  \norm{\omega_j}  = \bigg\lVert\int_0^\tau\rme^{(\tau-s)A}
    \left(r(t_{j-1}+c_1\tau)^{r-1}(s-c_1\tau)+\frac{r(r-1)}{2}\gamma_s^{r-2}(s-c_1\tau)^2\right)\\
    \times\left(r(t_{j-1}+c_2\tau)^{r-1}(s-c_2\tau)+\frac{r(r-1)}{2}\eta_s^{r-2}(s-c_2\tau)^2\right)    ds\frac{h''(\eta^r)}{2}\bigg\rVert,
\end{multline*}
where $\gamma_s$ is between $t_{j-1}+c_1\tau$ and $t_{j-1}+s$
and $\eta_s$ is between $t_{j-1}+c_2\tau$ and $t_{j-1}+s$.
Then, by using the triangle inequality we obtain
\begin{equation}\label{eq:omega1}
  \begin{aligned}
    \norm{\omega_j}&\leq C\bigg\lVert\underbrace{((t_{j-1}+c_1\tau)(t_{j-1}+c_2\tau))^{r-1}\int_0^\tau\rme^{(\tau-s)A}(s-c_1\tau)(s-c_2\tau)ds h''(\eta^r)}_{\omega_{j,1}}\bigg\rVert  \\
  &+C\bigg\lVert\underbrace{(t_{j-1}+c_2\tau)^{r-1}\int_0^\tau\rme^{(\tau-s)A}\gamma_s^{r-2}(s-c_1\tau)^2 (s-c_2\tau)dsh''(\eta^r)}_{\omega_{j,2}}\bigg\rVert\\
    &+C\bigg\lVert\underbrace{(t_{j-1}+c_1\tau)^{r-1}\int_0^\tau\rme^{(\tau-s)A}\eta_s^{r-2}(s-c_1\tau) (s-c_2\tau)^2dsh''(\eta^r)}_{\omega_{j,3}}\bigg\rVert\\
    &+C\bigg\lVert\underbrace{\int_0^\tau \rme^{(\tau-s)A}\gamma_s^{r-2}\eta_s^{r-2}(s-c_1\tau)^2(s-c_2\tau)^2ds h''(\eta^r)}_{\omega_{j,4}}\bigg\rVert.
  \end{aligned}
\end{equation}
Concerning the first term $\omega_{j,1}$, by exploiting that
\begin{multline*}
\int_0^\tau \rme^{(\tau-s)A}(s-c_1\tau)(s-c_2\tau)ds h''(\eta^r)=
\bigg(\tau^3\left(\frac{1}{3} - \frac{1}{2}(c_1+c_2) + c_1c_2\right)\\
     + \tau^4(2\varphi_4(\tau A)-(c_1+c_2)\varphi_3(\tau A)+c_1c_2\varphi_2(\tau A))A\bigg)h''(\eta^r)
\end{multline*}
we get
\begin{equation*}
  \norm{\omega_{j,1}}\leq Ct_{j-1}^{2r-2}\tau^4,
\end{equation*}
since $h''$ lies in $\mathcal{D}(A)$ and $c_1$ and $c_2$ satisfy
relation~\eqref{eq:c1c2relation}.
Concerning the remaining terms, using similar estimates as
in proof of Theorem~\ref{thm:expquadsp}, we have
\begin{equation*}
  \norm{\omega_{j,2}} \leq Ct_{j-1}^{2r-3}\tau^4,\quad
  \norm{\omega_{j,3}} \leq Ct_{j-1}^{2r-3}\tau^4,\quad
  \norm{\omega_{j,4}} \leq Ct_{j-1}^{2r-4}\tau^5.
\end{equation*}
Summarizing, for $j>1$ we get
\begin{equation*}
  \norm{\delta_{j}} \leq C\tau^4t_{j-1}^{3r-3} + C\tau^4t_{j-1}^{2r-2}
    + C\tau^4t_{j-1}^{2r-3}+C\tau^5t_{j-1}^{2r-4},
\end{equation*}
and therefore
\begin{equation*}
  \begin{aligned}
  \norm{\epsilon_n} &\leq C\left(\norm{\delta_1}+
  \sum_{j=2}^{n} \norm{\delta_{j}}\right) \\
  &\le C \tau^{1+2r}+
  C\tau^4 \sum_{j=2}^nt_{j-1}^{3r-3}
  + C\tau^4\sum_{j=2}^n t_{j-1}^{2r-2}
  + C\tau^4\sum_{j=2}^n t_{j-1}^{2r-3}
  + C\tau^5\sum_{j=2}^nt_{j-1}^{2r-4}.
  \end{aligned}
\end{equation*}
Finally, using Lemma~\ref{lem:hn}, we conclude
\begin{equation*}
  \norm{\epsilon_n} \leq C\tau^{1+2r}
\end{equation*}
which is the statement.
\end{proof}
We notice that relation~\eqref{eq:c1c2relation}  is satisfied, for instance,
by the Gauss and Gauss--Radau quadrature nodes.

\subsection{Exponential quadrature rule with \texorpdfstring{$\nu$}{nu}
  collocation points (EQRF\texorpdfstring{$\nu$}{nu})}\label{sec:nupoints}
We generalize now the procedure to $\nu$ collocation points by considering
the interpolation
\begin{equation*}
\sum_{j=0}^{\nu-1} \alpha_{j,n}(t_n+s)^{jr}\approx
  h((t_n+s)^r),\quad s\in[0,\tau],
\end{equation*}
where $\alpha_{0,n},\ldots,\alpha_{\nu-1,n}$ are the coefficients given by the
interpolation conditions on the non-confluent points $c_1,\ldots,c_\nu$
in $[0,1]$.
Substituting the approximation into the variation-of-constants
formula~\eqref{eq:voc} and using Proposition~\ref{prop:inttomitt}, we get
the integrator
\begin{multline}\label{eq:genint}
  y_{n+1}=\rme^{\tau A}y_n+\tau\varphi_1(\tau A)\alpha_{0,n}+\\
  \sum_{j=1}^{\nu-1}\Gamma(1+jr)\left((t_n+\tau)^{1+jr}\varphi_{1+jr}((t_n+\tau) A)-
  t_n^{1+jr}\rme^{\tau A}\varphi_{1+jr}(t_nA)\right)\alpha_{j,n},
\end{multline}
that we label EQRF$\nu$.
The theorems stated above generalize to the following.
\begin{theorem}\label{thm:nupoints}
  Let Assumption~\ref{assum:scs} be valid, and consider the non-confluent
  points $c_1,\ldots,c_\nu$ in $[0,1]$.  Then, the following bounds
  on the error $\epsilon_n=y(t_n)-y_n$ for the time marching
  scheme~\eqref{eq:genint} hold uniformly on $t_n\in[0,T]$:
  \begin{enumerate}
  \item if $h$ is $\nu$ times differentiable with bounded derivatives, then
     {\color{black}
       \begin{equation*}
         \norm{\epsilon_n}\le
         \begin{cases}
           C\tau^{\min\{1+\nu r,\nu\}} ,& \text{if $r\neq\frac{\nu-1}{\nu}$},\\
           C\tau^\nu(1+\lvert \log \tau \rvert), & \text{if $r=\frac{\nu-1}{\nu}$};
         \end{cases}
       \end{equation*}
     }
  \item if $h$ is $\nu+1$ times differentiable with bounded derivatives,
    $h^{(\nu)}\in\mathcal{D}(A)$, and the points satisfy
    \begin{equation}\label{eq:noderelation}
      \frac{1}{\nu+1}-\frac{c_1+\cdots+c_\nu}{\nu}+
      \sum_{i<j}\frac{c_ic_j}{\nu-1}-
      \sum_{i<j<k}\frac{c_ic_jc_k}{\nu-2}+\cdots+(-1)^\nu c_1\cdots c_\nu
      =0,
    \end{equation}
then $\norm{\epsilon_n}\le C\tau^{1+\nu r}$.
    \end{enumerate}
The constant $C$ may depend on the final time $T$,
but not on $n$.
\end{theorem}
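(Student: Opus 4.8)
The plan is to follow the pattern of the proofs of Theorems~\ref{thm:expquadsp}, \ref{thm:2p} and~\ref{thm:2pg}, carried out systematically in the ``fractional variable'' $u=(t_n+s)^r$. First I would subtract scheme~\eqref{eq:genint} from the variation-of-constants formula~\eqref{eq:voc}, using Proposition~\ref{prop:inttomitt} to recognise the integrals of the functions $(t_n+s)^{jr}$, which yields the recursion $\epsilon_{n+1}=\rme^{\tau A}\epsilon_n+\delta_{n+1}$ with
\[
  \delta_{n+1}=\int_0^\tau\rme^{(\tau-s)A}\rho_n(s)\,ds,\qquad
  \rho_n(s)=h\big((t_n+s)^r\big)-\sum_{j=0}^{\nu-1}\alpha_{j,n}(t_n+s)^{jr}.
\]
Here $\rho_n(s)$ is exactly the error, evaluated at $u=(t_n+s)^r$, of the polynomial that interpolates $h$ at the nodes $u_i=(t_n+c_i\tau)^r$, $i=1,\dots,\nu$; since $\epsilon_0=0$, $\norm{\epsilon_n}\le C\big(\norm{\delta_1}+\sum_{j=2}^n\norm{\delta_j}\big)$, so everything reduces to bounding the $\delta_j$. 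I would use the Newton/Hermite--Genocchi form of the interpolation error, $\rho_n(s)=h[u_1,\dots,u_\nu,u]\prod_{i=1}^\nu(u-u_i)$ (which can also be obtained by iterating Lemma~\ref{lem:h} as in the $\nu=2$ case), with $\norm{h[u_1,\dots,u_\nu,u]}\le C$ from the boundedness of $h^{(\nu)}$.

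For the first statement, the term $\delta_1$ (where $t_0=0$, so every factor satisfies $\abs{u-u_i}\le C\tau^r$) obeys $\norm{\delta_1}\le C\tau^{1+\nu r}$ directly. For $j>1$ one has $t_{j-1}>0$, and the mean value theorem gives $\abs{(t_{j-1}+s)^r-(t_{j-1}+c_i\tau)^r}\le C\,t_{j-1}^{r-1}\tau$ for $s\in[0,\tau]$ (using $r-1<0$), hence $\norm{\delta_j}\le C\,\tau^{\nu+1}t_{j-1}^{\nu r-\nu}$. Summing over $j$ and applying Lemma~\ref{lem:hn} with $p=\nu(1-r)$ then produces the three regimes: $p>1$ (i.e.\ $r<\tfrac{\nu-1}{\nu}$) yields $C\tau^{1+\nu r}$; $p<1$ yields $C\tau^{\nu}$; and $p=1$ (i.e.\ $r=\tfrac{\nu-1}{\nu}$) yields the logarithmic factor, exactly as in the statement.

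For the second statement I would refine the estimate of $\delta_j$, $j>1$, by one power of $\tau$, exploiting $h^{(\nu)}\in\mathcal{D}(A)$ and relation~\eqref{eq:noderelation} ($\norm{\delta_1}\le C\tau^{1+\nu r}$ as before). Writing $h[u_1,\dots,u_\nu,u]=\tfrac{1}{\nu!}h^{(\nu)}(u_1)+R(s)$, the remainder $R(s)$ is Lipschitz-small over the convex hull of $\{u_1,\dots,u_\nu,u\}$, whose diameter is $\le C\,t_{j-1}^{r-1}\tau$, so its contribution to $\delta_j$ is bounded by $C\,t_{j-1}^{(\nu+1)(r-1)}\tau^{\nu+2}$. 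In the remaining term $\tfrac{1}{\nu!}\big(\int_0^\tau\prod_{i=1}^\nu\big((t_{j-1}+s)^r-(t_{j-1}+c_i\tau)^r\big)\rme^{(\tau-s)A}ds\big)h^{(\nu)}(u_1)$ I would Taylor-expand each factor around $t_{j-1}$, namely $(t_{j-1}+s)^r-(t_{j-1}+c_i\tau)^r=r\,t_{j-1}^{r-1}(s-c_i\tau)+O(t_{j-1}^{r-2}\tau^2)$: the contributions carrying at least one quadratic correction are bounded by $C\,t_{j-1}^{\nu r-\nu-1}\tau^{\nu+2}$ (and smaller), while the leading one is $\tfrac{r^\nu}{\nu!}\,t_{j-1}^{\nu(r-1)}\big(\int_0^\tau\prod_{i=1}^\nu(s-c_i\tau)\,\rme^{(\tau-s)A}ds\big)h^{(\nu)}(u_1)$. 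Expanding $\prod_{i=1}^\nu(s-c_i\tau)$ through the elementary symmetric polynomials $e_k(c_1,\dots,c_\nu)$, using $\int_0^\tau s^m\rme^{(\tau-s)A}ds=m!\,\tau^{m+1}\varphi_{m+1}(\tau A)$ together with $\varphi_{m+1}(\tau A)=\tfrac{1}{(m+1)!}I+\tau A\,\varphi_{m+2}(\tau A)$ (both immediate from~\eqref{eq:phi}), one can write this integral in the form $\tau^{\nu+1}\sigma_\nu\,I+\tau^{\nu+2}(\text{bounded operator})A$, where $\sigma_\nu=\sum_{k=0}^{\nu}(-1)^k e_k(c_1,\dots,c_\nu)/(\nu+1-k)$ is \emph{precisely} the left-hand side of~\eqref{eq:noderelation} and hence vanishes. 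Consequently the integral equals $\tau^{\nu+2}$ times a bounded operator composed with $A$, and applying it to $h^{(\nu)}(u_1)\in\mathcal{D}(A)$ bounds the leading term by $C\,t_{j-1}^{\nu r-\nu}\tau^{\nu+2}$. Collecting these three types of bounds for $\delta_j$, $j>1$, and summing with Lemma~\ref{lem:hn} (the relevant exponents are $\nu(1-r)$, $\nu(1-r)+1>1$ and $(\nu+1)(1-r)$, and $\tau^{\nu+1}\abs{\log\tau}=o(\tau^{1+\nu r})$), every contribution is $\le C\tau^{1+\nu r}$, which together with the bound on $\delta_1$ gives the claim.

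The step I expect to be the main obstacle is the algebraic identity in the second part --- recognising that the identity-proportional coefficient $\sigma_\nu$ of $\int_0^\tau\prod_{i=1}^\nu(s-c_i\tau)\rme^{(\tau-s)A}ds$ is exactly the quantity appearing in~\eqref{eq:noderelation}; this symmetric-function computation is what pins down the precise form of the node condition and forces the cancellation of the leading $O(\tau^{\nu+1})$ part of $\delta_j$. The rest is essentially clerical: one has to check that every auxiliary error term carries a factor $\tau^{\nu+2}$ against a power $t_{j-1}^{-p}$ with $p>1$ (or with at most a logarithmic loss), so that Lemma~\ref{lem:hn} never pushes the convergence rate below $1+\nu r$.
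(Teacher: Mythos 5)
Your proposal is correct and follows essentially the same route as the paper: the defect recursion with $\delta_1$ treated separately, the interpolation error in the variable $u=(t_n+s)^r$ (your divided-difference/Hermite--Genocchi form is just a tidier packaging of the iterated Lemma~\ref{lem:h} expansions), Taylor expansion of the factors in $t_{j-1}$, cancellation of the leading $O(\tau^{\nu+1})$ part via the identity $\int_0^1(s-c_1)\cdots(s-c_\nu)\,ds=$ LHS of~\eqref{eq:noderelation}, and the final summation through Lemma~\ref{lem:hn}. The only cosmetic difference is that you isolate the identity part through the recurrence $\varphi_{m+1}(z)=\tfrac{1}{(m+1)!}+z\varphi_{m+2}(z)$ after expanding the nodal polynomial, whereas the paper uses $\rme^{(\tau-s)A}=I+(\tau-s)A\varphi_1((\tau-s)A)$ directly inside the integral; both yield the same $\tau^{\nu+2}$-times-bounded-operator-times-$A$ remainder acting on $h^{(\nu)}\in\mathcal{D}(A)$.
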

\begin{proof}
  The proof is very similar to that of Theorems~\ref{thm:2p} and
  \ref{thm:2pg}, and employs Lemma~\ref{lem:h} and Taylor
  expansions. In particular, for the second statement, we exploit
the identity
  \begin{multline*}
      \int_0^1(s-c_1)\cdots(s-c_\nu)ds=\\
      \frac{1}{\nu+1}-\frac{c_1+\cdots+c_\nu}{\nu}+
      \sum_{i<j}\frac{c_ic_j}{\nu-1}-
      \sum_{i<j<k}\frac{c_ic_jc_k}{\nu-2}+\cdots+(-1)^\nu c_1\cdots c_\nu
  \end{multline*}
  in the following formula
  \begin{multline*}
    \int_0^\tau \rme^{(\tau-s)A}(s-c_1\tau)\cdots(s-c_\nu\tau)ds
          h^{(\nu)}(\eta^r)=\\
      \int_0^\tau (s-c_1\tau)\cdots(s-c_\nu\tau)ds      h^{(\nu)}(\eta^r)\\+
      \int_0^\tau \varphi_1((\tau-s)A)(\tau-s)(s-c_1\tau)\cdots(s-c_\nu\tau)ds
      Ah^{(\nu)}(\eta^r).
  \end{multline*}
This is used in the estimate of the term analogous to $\omega_{j,1}$ in
  formula~\eqref{eq:omega1}. We omit the remaining details.
\end{proof}
Notice that relation~\eqref{eq:noderelation} is satisfied by
any quadrature rule with $\nu$ nodes
of degree of exactness at least $\nu$, such as Gauss--Lobatto
(for $\nu\ge 3$), Gauss--Radau (for $\nu\ge 2$),
and Gauss.
\begin{remark}\label{rem:as}
  In the second statement of Theorem~\ref{thm:nupoints}, the assumption
  $h^{(\nu)}\in\mathcal{D}(A)$ can be removed if
$\{\rme^{t A}\}_{t\geq 0}$ is an \textit{analytic} semigroup, e.g.,
$A=\partial_{xx}$ with homogeneous Dirichlet boundary conditions.
This follows from the fact that in the analytic case the bounds
\begin{equation*}
  \norm{(tA)^\kappa\rme^{tA}}\leq C \quad \text{and} \quad
  \bigg\lVert\tau A \sum_{j=1}^{n-1}\rme^{j\tau A}\bigg\rVert \leq C
\end{equation*}
hold for $\kappa\geq 0$ and $0\leq t \leq T$ (see~\cite[Lemma~1]{HO05}).
We numerically confirm this in Section~\ref{sec:1p}.
\end{remark}
\section{Numerical experiments}\label{sec:numexp}
In this section we present some numerical examples that demonstrate the
theoretical findings and highlight the effectiveness
of the proposed procedure. {\color{black} We perform the experiments by suitably
semidiscretizing in space one-dimensional evolutionary PDEs, leading to a system
of ODEs in the form~\eqref{eq:ODE} where $A$ is the discretization matrix.}
All the simulations have been realized
in MathWorks Matlab\textsuperscript{\textregistered} R2022a on an
Intel Core i7-10750H CPU (16GB of RAM).
{\color{black}
\subsection{On the computation of the arising matrix functions}\label{sec:matfun}
The EQRF$\nu$ schemes require the computation of the fractional $\varphi$
functions~\eqref{eq:philambda} evaluated at a matrix argument.
Being special instances of the Mittag--Leffler functions, they
could be approximated by employing techniques available in the literature (such as
those explained in~\cite{DN24,GP18}). However, in our first three examples
(see Sections~\ref{sec:1p},~\ref{sec:2pexact}, and~\ref{sec:2p3p}) we employ
a Fourier pseudospectral semidiscretization in space,
and therefore all the needed matrix functions are computed in a scalar fashion in
Fourier space. In particular, we exploit the algorithm for general Mittag--Leffler
functions of scalar arguments described in~\cite{G15} to approximate the fractional $\varphi$ functions.
Notice that in these examples we are just
interested in numerically demonstrating the bounds derived in the previous
sections, and hence we do not discuss about the computational cost of the
procedures.

On the other hand, in the last example (see Section~\ref{sec:wpd})
we focus on the computational aspect. In fact, for that experiment we perform the semidiscretization
in space with standard second-order finite differences and we diagonalize once and for all the arising discretization matrix.
Then, we can compute
the matrix functions needed by EQRF2~\eqref{eq:ccint_ml} on scalar arguments.
This approach, however, has a non-negligible computational burden
(see Figure~\ref{fig:homdirgauss}), since 
the scalar fractional $\varphi$ functions 
depend on the current time and must be recomputed at each time step. This contrasts what happens for
the classical exponential quadrature rules, e.g., the scheme
\begin{multline}\label{eq:CEQR2}
  y_{n+1}=\rme^{\tau A}y_n+\tau\left(\frac{c_2}{c_2-c_1}\varphi_1(\tau A)
  -\frac{1}{c_2-c_1}\varphi_2(\tau A)\right)g(t_n+c_1\tau)\\+
  \tau\left(-\frac{c_1}{c_2-c_1}\varphi_1(\tau A)
  +\frac{1}{c_2-c_1}\varphi_2(\tau A)\right)g(t_n+c_2\tau).
\end{multline}
An alternative approach for EQRF2 is briefly described in the following.
We notice that the needed linear combination of fractional
$\varphi$ functions can be computed \textit{at once}
by using its integral representation, see formulation~\eqref{eq:ccint_int}
and Proposition~\ref{prop:inttomitt}.
In this sense, at each time step EQRF2 requires to compute
\begin{equation*}
  \int_0^\tau \rme^{(\tau-s)\Lambda}(t_n+s)^rds,
\end{equation*}
being $\Lambda$ the diagonal matrix containing the eigenvalues.
To this aim, if $t_n=0$ we approximate the integral
by using the Gauss--Jacobi quadrature formula with quadrature weight $s^r$,
while for $t_n> 0$ we employ the Gauss--Legendre
rule. In any case, we fix the number of quadrature nodes $\sigma_i$
to 16.
Note that, since the time step size $\tau$ is constant, this allows to
compute
the quantities $\rme^{(\tau-\sigma_i)\Lambda}$
once and for all before the actual time
integration starts. In fact, in our numerical example, this procedure results
in consistent computational savings without sacrificing accuracy (see the outcome
in Section~\ref{sec:wpd}).
The study of a general, detailed, and more sophisticated technique to compute
(linear combinations of) fractional $\varphi$ functions is a subject of
ongoing work and is far from the scope of this manuscript.

Finally, we mention that the $\varphi$ functions required by the classical exponential
quadrature rule~\eqref{eq:CEQR2}
could be computed by
standard algorithms (see, e.g.,~\cite{SW09,CC24,CC24bis,LYL22}). However, as
for the fractional $\varphi$ functions, we in fact compute them on scalar
arguments by exploiting the diagonalization procedure.
}
\subsection{Results with one collocation point (EQRF1)}\label{sec:1p}
We {\color{black}consider here} the integrator discussed in Section~\ref{sec:EQRF1},
i.e., the classical exponential quadrature rule with the single node
$c_1=\frac{1}{2}$. As test problem, we take
\begin{equation}\label{eq:perbc}
  \left\{
  \begin{aligned}
    \partial_t y(t,x) &= \zeta \partial_{xx} y(t,x) + t^{\frac{3}{4}}v(x), \\
    y(0{\color{black},x}) &= \sin(2\pi x)
  \end{aligned}
  \right.
\end{equation}
in the spatial domain $\Omega=(0,1)$ with periodic boundary conditions. The final
simulation time is $T=3$. We semidiscretize the problem with a Fourier pseudospectral
decomposition using $500$ modes.
We perform the first test with $\zeta=\rmi$ and $v(x)=1/(2+\cos(2\pi x))$.
This choice satisfies all the requirements of the second statement in
Theorem~\ref{thm:expquadsp}.
Therefore, since $r=\frac{3}{4}$, we expect order of convergence
$1+r=\frac{7}{4}$, which is indeed demonstrated
in Figure~\ref{fig:perbc} (top left). On the other hand, by taking
$\zeta=\rmi$ and $v(x)=x$, the latter does not satisfy the boundary
conditions, and therefore the requirement on the domain of the operator in
the second statement
of Theorem~\ref{thm:expquadsp} is violated. Indeed, the results
presented in Figure~\ref{fig:perbc} (top right) show an oscillatory behavior of
the error as the number of time steps increases. Note that this requirement is
not needed if the semigroup generated by the operator is analytic
(see Remark~\ref{rem:as}). In fact, we numerically demonstrate this by setting
$\zeta=1$ and $v(x)=1/(2+\cos(2\pi x))$ (Figure~\ref{fig:perbc}, bottom left),
and $\zeta=1$ and $v(x)=x$ (Figure~\ref{fig:perbc}, bottom right).
\begin{figure}[!htb]
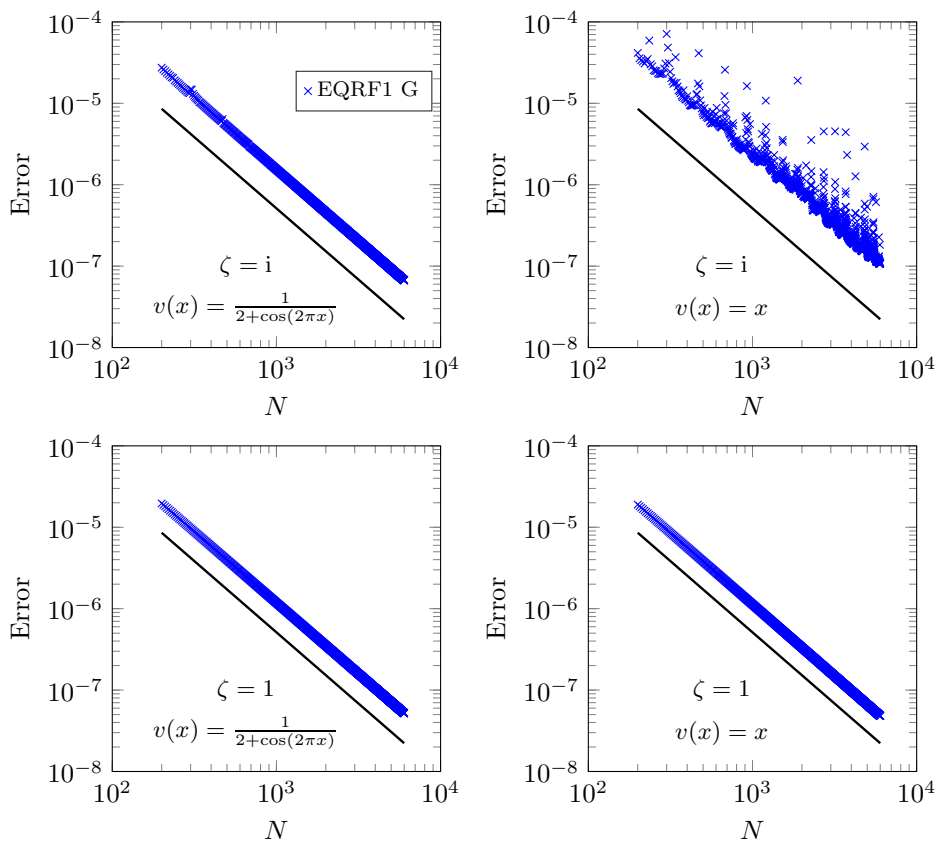

  \centering
  $r=3/4$\\[2ex]
  \input{plot_sc_okb.tex}
  \input{plot_sc_nob.tex}\\
  \input{plot_an_okb.tex}
  \input{plot_an_nob.tex}
  \caption{Observed rate of convergence of the
    error $\lVert y_N-y(T)\rVert_\infty$, with
    different choices of $\zeta$ and $v(x)$, for problem~\eqref{eq:perbc}.
    The time marching is performed with scheme~\eqref{eq:cc1q} setting
    $c_1=\frac{1}{2}$.
    The slope of the solid line is $-1.75$.}
  \label{fig:perbc}
\end{figure}

\subsection{Exactness of EQRF2}\label{sec:2pexact}
We consider now the problem
\begin{equation}\label{eq:per}
  \left\{
  \begin{aligned}
    \partial_t y(t,x) &= \rmi\partial_{xx} y(t,x) + \frac{t^r}{2+\cos(2\pi x)}, \\
    y(0{\color{black},x}) &= \sin(2\pi x),
  \end{aligned}
  \right.
\end{equation}
with periodic boundary conditions in the spatial domain $\Omega=(0,1)$
and different choices of $r$. The time
interval is $[0,T]$, with $T=1$.
The spatial discretization is again performed with a Fourier pseudospectral method
using $500$ modes.
As time integrator, we consider
the proposed method EQRF2~\eqref{eq:ccint_ml} with $c_1=0$ and $c_2=1$
{\color{black}that we label EQRF2~T}.
In addition, we test the classical exponential quadrature rule~\eqref{eq:CEQR2}
again with $c_1=0$ and $c_2=1$ {\color{black} (labeled CEQR2~T)}.
Notice that the selected collocation points correspond to those of
the trapezoidal quadrature rule.
The results for
different values of $r$ are presented in Figure~\ref{fig:per}.
As mentioned at the beginning of Section~\ref{sec:EQRF2}, the CEQR2 T
method suffers from order reduction (to $1+r$, in particular), while the error
of the proposed integrator EQRF2 T is around $10^{-14}$ (which is
in fact the error of the routine employed for computing the fractional
$\varphi$ functions).
This is expected, since scheme~\eqref{eq:ccint_ml} is exact for
problem~\eqref{eq:per}.
\begin{figure}[!htb]
  \centering
%
%
\begin{tikzpicture}

\begin{axis}[%
width=1.5in,
height=1.7in,
at={(0.758in,0.481in)},
scale only axis,
xmode=log,
xmin=1,
xmax=10^4,
xminorticks=true,
xlabel={$N$},
ymode=log,
ymin=1e-16,
ymax=1e-1,
ylabel={Error},
yminorticks=true,
title = {$r=3/4$},
axis background/.style={fill=white},
legend style={legend cell align=left, align=left, draw=white!15!black, font=\footnotesize, at={(0.92,0.55)}}
]
\addplot [color=red, only marks,line width=1pt, mark size = 3.5pt, mark=o, mark options={solid, red}]
  table[row sep=crcr]{%
4	0.00477857974206447\\
16	0.000571379294527858\\
64.0000000000001	5.35299291255972e-05\\
256	4.83934445509629e-06\\
1024	4.6014125023438e-07\\
};
\addlegendentry{CEQR2 T}
\addplot [color=black, line width=0.8pt, forget plot]
  table[row sep=crcr]{%
4	0.00753895424384009\\
1024	4.6014125023438e-07\\
};

\addplot [color=blue, only marks, line width=1pt, mark size=3.5pt, mark=x, mark options={solid, blue}]
  table[row sep=crcr]{%
4	1.0778315928077e-15\\
16	3.11060637828084e-15\\
64.0000000000001	7.40611131540354e-15\\
256	2.34998445313485e-14\\
1024	7.31462408192199e-14\\
};
\addlegendentry{EQRF2 T}

\end{axis}

\end{tikzpicture}%
%
%
\begin{tikzpicture}

\begin{axis}[%
width=1.5in,
height=1.7in,
at={(0.758in,0.481in)},
scale only axis,
xmode=log,
xmin=1,
xmax=10^4,
xminorticks=true,
xlabel={$N$},
ymode=log,
ymin=1e-16,
ymax=1e-1,
ylabel={Error},
yminorticks=true,
title = {$r=1/2$},
axis background/.style={fill=white},
legend style={legend cell align=left, align=left, draw=white!15!black, font=\footnotesize, at={(0.92,0.4)}}
]
\addplot [color=red, only marks,line width=1pt, mark size = 3.5pt, mark=o, mark options={solid, red}]
  table[row sep=crcr]{%
4	0.0143918454776753\\
16	0.00229650918644059\\
64.0000000000001	0.000282393666343389\\
256	3.60335537206172e-05\\
1024	4.62824347354523e-06\\
};
\addplot [color=black, dashed, line width=0.8pt, forget plot]
  table[row sep=crcr]{%
4	0.0189572852676413\\
1024	4.62824347354523e-06\\
};

\addplot [color=blue, only marks, line width=1pt, mark size=3.5pt, mark=x, mark options={solid, blue}]
  table[row sep=crcr]{%
4	1.15511223068214e-15\\
16	3.33806235505671e-15\\
64.0000000000001	7.48557119891195e-15\\
256	2.21889813637079e-14\\
1024	6.91163991687798e-14\\
};

\end{axis}

\end{tikzpicture}%
%
%
\begin{tikzpicture}

\begin{axis}[%
width=1.5in,
height=1.7in,
at={(0.758in,0.481in)},
scale only axis,
xmode=log,
xmin=1,
xmax=10^4,
xminorticks=true,
xlabel={$N$},
ymode=log,
ymin=1e-16,
ymax=1e-1,
ylabel={Error},
yminorticks=true,
title = {$r=1/4$},
axis background/.style={fill=white},
legend style={legend cell align=left, align=left, draw=white!15!black, font=\footnotesize, at={(0.92,0.4)}}
]
\addplot [color=red, only marks,line width=1pt, mark size = 3.5pt, mark=o, mark options={solid, red}]
  table[row sep=crcr]{%
4	0.0347814716122707\\
16	0.007568597628358\\
64.0000000000001	0.001270067489167\\
256	0.000232537541375689\\
1024	4.1331875499331e-05\\
};
\addplot [color=black, dotted, line width=0.8pt, forget plot]
  table[row sep=crcr]{%
4	0.0423238405113149\\
1024	4.1331875499331e-05\\
};

\addplot [color=blue, only marks, line width=1pt, mark size=3.5pt, mark=x, mark options={solid, blue}]
  table[row sep=crcr]{%
4	1.00534970772086e-15\\
16	3.60480714057903e-15\\
64.0000000000001	7.12707841441686e-15\\
256	1.97409080377525e-14\\
1024	7.62107268627205e-14\\
};

\end{axis}

\end{tikzpicture}%
  \caption{Observed convergence rate of the error
    $\lVert y_N-y(T)\rVert_\infty$, with
    different values of $r$, for problem~\eqref{eq:per}.
    For both the classical exponential quadrature rule
    CEQR2 T \eqref{eq:CEQR2} and our proposed
    method EQRF2 T \eqref{eq:ccint_ml} we employ the points $c_1=0$ and $c_2=1$
    (i.e., the trapezoidal quadrature nodes).
    The slope of the solid line is $-1.75$,
    of the dashed lines is $-1.5$, of the dotted line
    is $-1.25$.}
  \label{fig:per}
\end{figure}
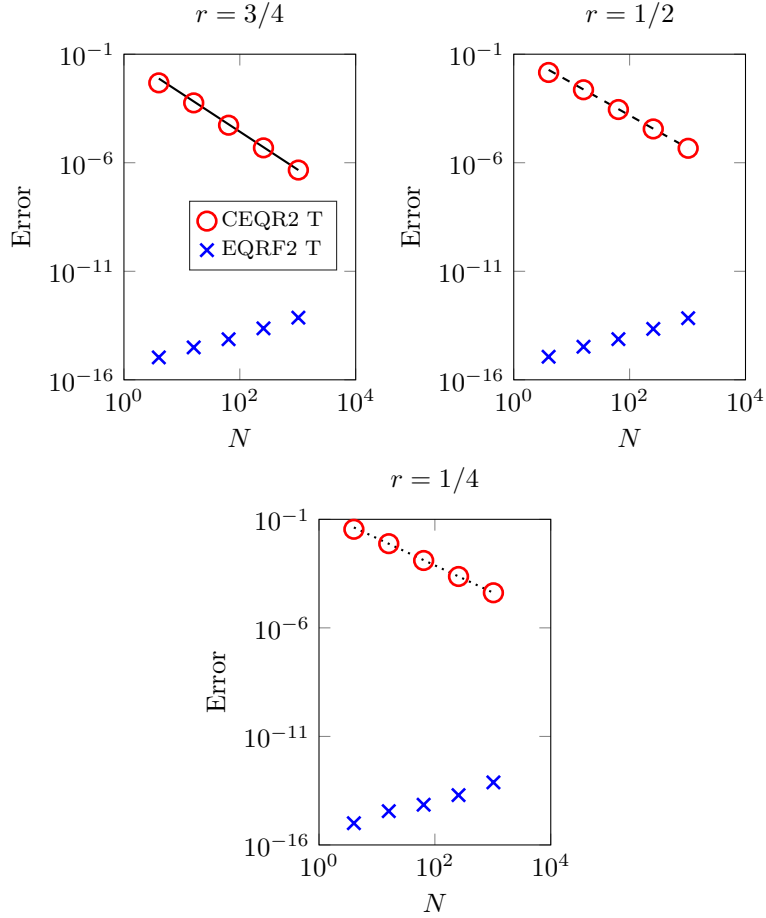

\subsection{Results with two and three collocation points (EQRF2 and EQRF3)}\label{sec:2p3p}
We focus now our attention on the problem
\begin{equation}\label{eq:perrad}
  \left\{
  \begin{aligned}
    \partial_t y(t,x) &= \rmi \partial_{xx} y(t,x) + \frac{\rme^{t^r}}{2+\cos(2\pi x)}, \\
    y(0{\color{black},x}) &= \sin(2\pi x),
  \end{aligned}
  \right.
\end{equation}
with periodic boundary conditions in the spatial domain $\Omega=(0,1)$
and different choices of $r$. The time
interval is $[0,T]$, with final time set to $T=2$.
The spatial discretization is performed once again with a Fourier pseudospectral method
using $500$ modes.
The results of the integrator EQRF2 \eqref{eq:ccint_ml},
 choosing as collocation
points the
trapezoidal nodes (labeled EQRF2 T) and the Gauss--Radau quadrature nodes
(labeled EQRF2 GR), are summarized in
Figure~\ref{fig:periodicrad}. Remark that the trapezoidal nodes do not
satisfy relation~\eqref{eq:c1c2relation} in Theorem~\ref{thm:2pg},
while it is the case for the Gauss--Radau ones.
{\color{black} The obtained outcome matches with the analysis carried out in
Section~\ref{sec:EQRF2}}.
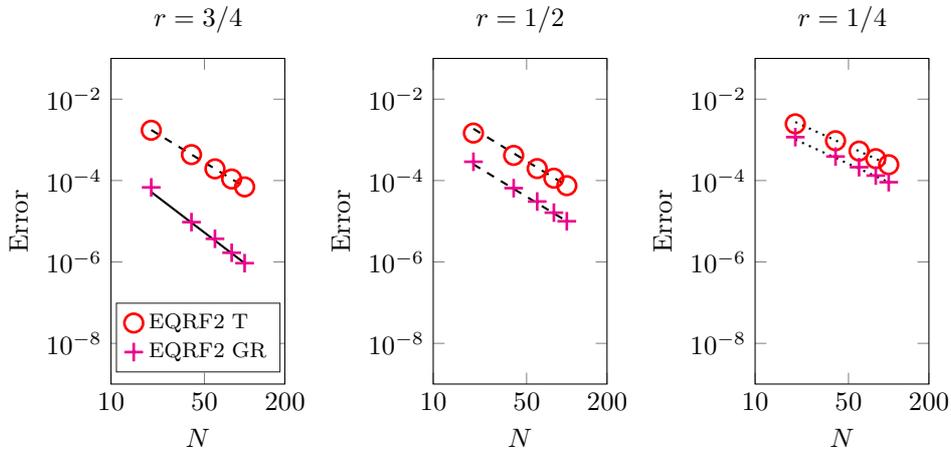
\begin{figure}[!htb]
  \centering
%
%
\begin{tikzpicture}

\begin{axis}[%
width=1.5in,
height=1.7in,
at={(0.758in,0.481in)},
scale only axis,
xmode=log,
xmin=10,
xmax=200,
xminorticks=true,
xlabel={$N$},
xtick = {10,50,200},
xticklabels = {{10},{50},{200}},
ymode=log,
ymin=1e-9,
ymax=1e-1,
ylabel={Error},
yminorticks=true,
title = {$r=3/4$},
axis background/.style={fill=white},
legend style={legend cell align=left, at={(0.975,0.25)}, align=left, draw=white!15!black, font=\footnotesize}
]
\addplot [color=red, only marks,line width=1pt, mark size = 3.5pt, mark=o, mark options={solid, red}]
  table[row sep=crcr]{%
20	0.00171977853206307\\
40	0.000433463895182722\\
60	0.000193754610285997\\
80	0.000109057006046902\\
100	6.98350204351073e-05\\
};
\addlegendentry{EQRF2 T}
\addplot [color=black, dashed, line width=0.8pt, forget plot]
  table[row sep=crcr]{%
20	0.00174587551087768\\
100	6.98350204351073e-05\\
};

\addplot [color=magenta, only marks, line width=1pt, mark size=3.5pt, mark=+, mark options={solid, magenta}]
  table[row sep=crcr]{%
20	6.7854264681226e-05\\
40	9.51572116867524e-06\\
60	3.70747311667576e-06\\
80	1.68376456349138e-06\\
100	9.34084869686216e-07\\
};
\addlegendentry{EQRF2 GR}
\addplot [color=black, line width=0.8pt, forget plot]
  table[row sep=crcr]{%
20	5.22169316343103e-05\\
100	9.34084869686216e-07\\
};

\end{axis}

\end{tikzpicture}%
%
%
\begin{tikzpicture}

\begin{axis}[%
width=1.5in,
height=1.7in,
at={(0.758in,0.481in)},
scale only axis,
xmode=log,
xmin=10,
xmax=200,
xminorticks=true,
xlabel={$N$},
xtick = {10,50,200},
xticklabels = {{10},{50},{200}},
ymode=log,
ymin=1e-9,
ymax=1e-1,
ylabel={Error},
yminorticks=true,
title = {$r=1/2$},
axis background/.style={fill=white},
legend style={legend cell align=left, align=left, draw=white!15!black, font=\footnotesize}
]
\addplot [color=red, only marks,line width=1pt, mark size = 3.5pt, mark=o, mark options={solid, red}]
  table[row sep=crcr]{%
20	0.00146688396520143\\
40	0.000414498181503756\\
60	0.000196561806214586\\
80	0.000114286565712038\\
100	7.48557031462078e-05\\
};
\addplot [color=black, dashed, line width=0.8pt]
  table[row sep=crcr]{%
20	0.00187139257865519\\
100	7.48557031462078e-05\\
};

\addplot [color=magenta, only marks, line width=1pt, mark size=3.5pt, mark=+, mark options={solid, magenta}]
  table[row sep=crcr]{%
20	0.000289497391722791\\
40	6.50507598472707e-05\\
60	3.05663931449049e-05\\
80	1.62580472441726e-05\\
100	1.00010169393171e-05\\
};
\addplot [color=black, dashed, line width=0.8pt]
  table[row sep=crcr]{%
20	0.000250025423482927\\
100	1.00010169393171e-05\\
};

\end{axis}

\end{tikzpicture}%
%
%
\begin{tikzpicture}

\begin{axis}[%
width=1.5in,
height=1.7in,
at={(0.758in,0.481in)},
scale only axis,
xmode=log,
xmin=10,
xmax=200,
xminorticks=true,
xlabel={$N$},
xtick = {10,50,200},
xticklabels = {{10},{50},{200}},
ymode=log,
ymin=1e-9,
ymax=1e-1,
ylabel={Error},
yminorticks=true,
title = {$r=1/4$},
axis background/.style={fill=white},
legend style={legend cell align=left, align=left, draw=white!15!black, font=\footnotesize}
]
\addplot [color=red, only marks,line width=1pt, mark size = 3.5pt, mark=o, mark options={solid, red}]
  table[row sep=crcr]{%
20	0.00247277231517805\\
40	0.000950949598629419\\
60	0.000529583900835424\\
80	0.000344329695639994\\
100	0.000245016450619068\\
};
\addplot [color=black, dotted, line width=0.8pt]
  table[row sep=crcr]{%
20	0.00273936719594978\\
100	0.000245016450619068\\
};

\addplot [color=magenta, only marks, line width=1pt, mark size=3.5pt, mark=+, mark options={solid, magenta}]
  table[row sep=crcr]{%
20	0.00116518374184963\\
40	0.000389000916036611\\
60	0.000212836231380704\\
80	0.000132392111148238\\
100	9.10738660559894e-05\\
};
\addplot [color=black, dotted, line width=0.8pt]
  table[row sep=crcr]{%
20	0.00101823677737451\\
100	9.10738660559894e-05\\
};

\end{axis}

\end{tikzpicture}%
  \caption{Observed convergence rate of the error
    $\lVert y_N-y(T)\rVert_\infty$,
    with different values of $r$, for problem~\eqref{eq:perrad}.
    The time marching is performed with EQRF2 \eqref{eq:ccint_ml}. We employ
    the trapezoidal (EQRF2 T) and the Gauss--Radau (EQRF2 GR) collocation
    points.
    The slope of the solid line is $-2.5$, of the dashed line is $2$
    while of the dotted line is $-1.5$.}
  \label{fig:periodicrad}
\end{figure}

We then repeat the same example using the proposed technique with three
collocation
points, i.e., using scheme~\eqref{eq:genint} with $\nu=3$. More in detail,
we choose a set of points that does not satisfy
condition~\eqref{eq:noderelation}
in Theorem~\ref{thm:nupoints} ($c_1=0$, $c_2=\frac{1}{3}$, and $c_3=1$)
and a set that satisfies it
(the Gauss--Lobatto nodes, corresponding to Simpson's quadrature rule).
The resulting integrators are labeled EQRF3 NC and 
EQRF3 GL, respectively. The results, presented in Figure~\ref{fig:periodiclob},
are in line with the theoretical findings {\color{black}of Section~\ref{sec:nupoints}}.
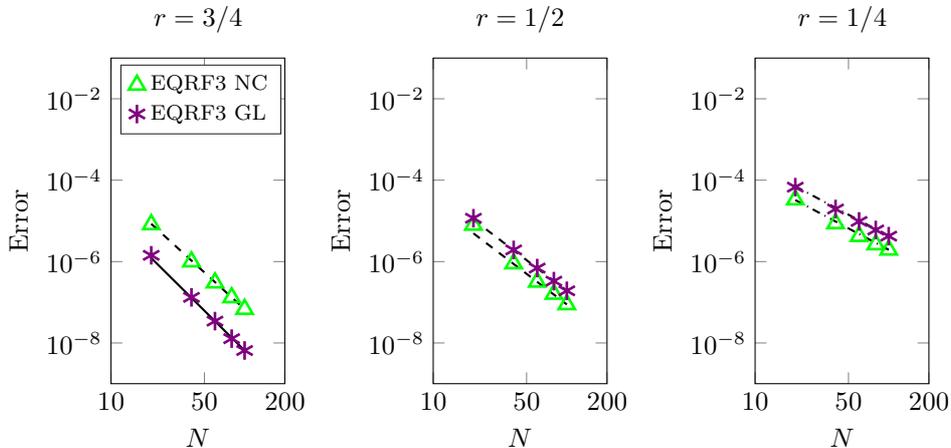
\begin{figure}[!htb]
  \centering
%
%
\begin{tikzpicture}

\begin{axis}[%
width=1.5in,
height=1.7in,
at={(0.758in,0.481in)},
scale only axis,
xmode=log,
xmin=10,
xmax=200,
xminorticks=true,
xlabel={$N$},
xtick = {10,50,200},
xticklabels = {{10},{50},{200}},
ymode=log,
ymin=1e-9,
ymax=1e-1,
ylabel={Error},
yminorticks=true,
title = {$r=3/4$},
axis background/.style={fill=white},
legend style={legend cell align=left, align=left, draw=white!15!black, font=\footnotesize}
]
\addplot [color=green, only marks,line width=1pt, mark size = 3.5pt, mark=triangle, mark options={solid, green}]
  table[row sep=crcr]{%
20	8.16427691369955e-06\\
40	9.95095297695362e-07\\
60	3.02809543293357e-07\\
80	1.31177648417562e-07\\
100	6.77729275460979e-08\\
};
\addlegendentry{EQRF3 NC}
\addplot [color=black, dashed, line width=0.8pt, forget plot]
  table[row sep=crcr]{%
20	8.47161594326224e-06\\
100	6.77729275460979e-08\\
};

\addplot [color=violet, only marks, line width=1pt, mark size=3.5pt, mark=asterisk, mark options={solid, violet}]
  table[row sep=crcr]{%
20	1.42692348454141e-06\\
40	1.32282372626167e-07\\
60	3.44634450640606e-08\\
80	1.27708427830807e-08\\
100	6.53725922265487e-09\\
};
\addlegendentry{EQRF3 GL}
\addplot [color=black, line width=0.8pt, forget plot]
  table[row sep=crcr]{%
20	1.22193532639052e-06\\
100	6.53725922265487e-09\\
};

\end{axis}

\end{tikzpicture}%
%
%
\begin{tikzpicture}

\begin{axis}[%
width=1.5in,
height=1.7in,
at={(0.758in,0.481in)},
scale only axis,
xmode=log,
xmin=10,
xmax=200,
xminorticks=true,
xlabel={$N$},
xtick = {10,50,200},
xticklabels = {{10},{50},{200}},
ymode=log,
ymin=1e-9,
ymax=1e-1,
ylabel={Error},
yminorticks=true,
title = {$r=1/2$},
axis background/.style={fill=white},
legend style={legend cell align=left, align=left, draw=white!15!black, font=\footnotesize}
]
\addplot [color=green, only marks,line width=1pt, mark size = 3.5pt, mark=triangle, mark options={solid, green}]
  table[row sep=crcr]{%
20	7.92869176773069e-06\\
40	8.93879049351703e-07\\
60	3.16532735748157e-07\\
80	1.59751110881895e-07\\
100	8.84468883783463e-08\\
};
\addplot [color=black, dotted, line width=0.8pt, forget plot]
  table[row sep=crcr]{%
20	1.07090380543422e-05\\
100	1.9156909650513e-07\\
};

\addplot [color=violet, only marks, line width=1pt, mark size=3.5pt, mark=asterisk, mark options={solid, violet}]
  table[row sep=crcr]{%
20	1.16188254743319e-05\\
40	1.93424242571304e-06\\
60	6.92972729655206e-07\\
80	3.30709372143842e-07\\
100	1.9156909650513e-07\\
};
\addplot [color=black, dotted, line width=0.8pt, forget plot]
  table[row sep=crcr]{%
20	4.94433137030796e-06\\
100	8.84468883783463e-08\\
};

\end{axis}

\end{tikzpicture}%
%
%
\begin{tikzpicture}

\begin{axis}[%
width=1.5in,
height=1.7in,
at={(0.758in,0.481in)},
scale only axis,
xmode=log,
xmin=10,
xmax=200,
xminorticks=true,
xlabel={$N$},
xtick = {10,50,200},
xticklabels = {{10},{50},{200}},
ymode=log,
ymin=1e-9,
ymax=1e-1,
ylabel={Error},
yminorticks=true,
title = {$r=1/4$},
axis background/.style={fill=white},
legend style={legend cell align=left, align=left, draw=white!15!black, font=\footnotesize}
]
\addplot [color=green, only marks,line width=1pt, mark size = 3.5pt, mark=triangle, mark options={solid, green}]
  table[row sep=crcr]{%
20	3.33212758011389e-05\\
40	8.63192304060295e-06\\
60	4.19949717282434e-06\\
80	2.62591170191901e-06\\
100	1.9456258326123e-06\\
};
\addplot [color=black, dashdotted, line width=0.8pt, forget plot]
  table[row sep=crcr]{%
20	7.05407129858787e-05\\
100	4.21931876759639e-06\\
};

\addplot [color=violet, only marks, line width=1pt, mark size=3.5pt, mark=asterisk, mark options={solid, violet}]
  table[row sep=crcr]{%
20	6.73940525635097e-05\\
40	1.95969950621468e-05\\
60	9.70111956025368e-06\\
80	5.98570928533983e-06\\
100	4.21931876759639e-06\\
};
\addplot [color=black, dashdotted,line width=0.8pt, forget plot]
  table[row sep=crcr]{%
20	3.25279603167789e-05\\
100	1.9456258326123e-06\\
};

\end{axis}

\end{tikzpicture}%
  \caption{Observed convergence rate of the error
    $\lVert y_N-y(T)\rVert_\infty$, 
    with different values of $r$, for problem~\eqref{eq:perrad}.
    The time marching is performed with EQRF3 \eqref{eq:genint}.
    We employ 
    $c_1=0$, $c_2=\frac{1}{3}$, and $c_3=1$ (EQRF3 NC) and the
    Gauss--Lobatto (EQRF3 GL) collocation points.
    The slope of the solid line is $-3.25$, of the dashed line is $-3$,
    of the dotted line is $-2.5$, while of the dashdotted line is $-1.75$.}
  \label{fig:periodiclob}
\end{figure}

\subsection{Work precision diagram}\label{sec:wpd}
Finally, we consider the variable-coefficient heat equation
\begin{equation}\label{eq:heat}
  \left\{
  \begin{aligned}
    \partial_t y(t,x) &= \frac{1+x^2}{10}\partial_{xx} y(t,x) +
    \rme^{t^{\frac{3}{4}}}x(1-x), \\
    y(0{\color{black},x}) &= 4x(1-x),
  \end{aligned}
  \right.
\end{equation}
coupled with homogeneous Dirichlet boundary conditions, in the spatial domain
$\Omega=(0,1)$. The time interval is $[0,T]$, with $T=2$.
We perform the spatial discretization with standard second-order finite
differences using $1000$ inner points, which leads to a system of ODEs in the
form~\eqref{eq:ODE}.
{\color{black} As time integrator we employ the EQRF2 method~\eqref{eq:ccint_ml}
with two Gauss quadrature nodes.
We label the scheme EQRF2~G~(F) when using the formulation based on fractional
$\varphi$ functions, while we label it EQRF2~G~(I) when using the 
equivalent integral formulation (see the discussion in Section~\ref{sec:matfun}).
As term of comparison we consider the classical exponential quadrature
rule~\eqref{eq:CEQR2} with two Gauss quadrature nodes and denote it
CEQR2~G.
We remark that the matrix $A$ of the ODEs system can be symmetrized
and subsequently diagonalized as $A = (DV)\Lambda(V^{\sf T}D^{-1})$,
being $D$ the diagonal symmetrization matrix,
$V$ an orthogonal matrix, and $\Lambda$ the
diagonal matrix collecting the eigenvalues. Therefore, all the
relevant matrix functions can be computed in a scalar fashion on $\Lambda$.}
The outcome of the simulations is summarized in Figure~\ref{fig:homdirgauss}.
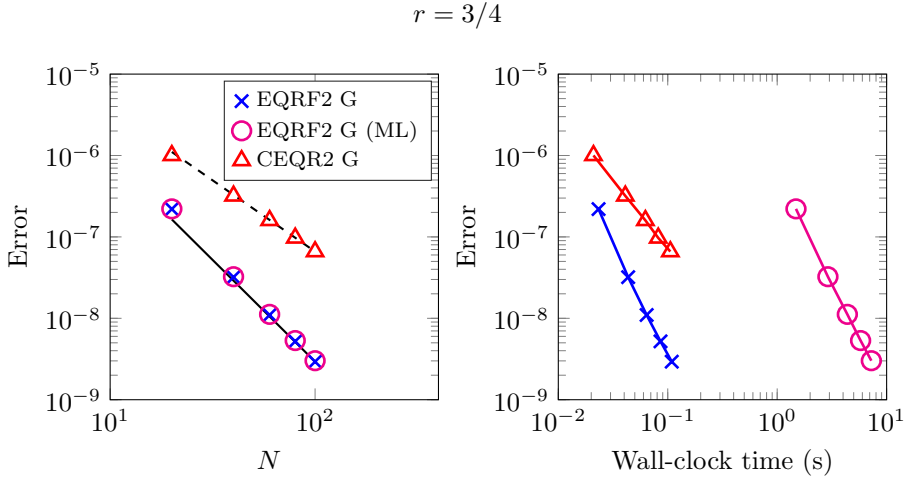
\begin{figure}[!htb]
  \centering
  $r=3/4$\\[2ex]
%
%
%
\begin{tikzpicture}

\begin{axis}[%
width=1.7in,
height=1.7in,
at={(0.758in,0.481in)},
scale only axis,
xmode=log,
xmin=10,
xmax=400,
xminorticks=true,
ymode=log,
ymin=1e-9,
ymax=1e-05,
yminorticks=true,
axis background/.style={fill=white},
legend style={legend cell align=left, align=left, draw=white!15!black, font=\footnotesize},
xlabel = {$N$\phantom{(}},
ylabel = {Error}
]
\addplot [color=red, only marks, line width=1pt, mark size=3.5pt, mark=triangle, mark options={solid, red}]
  table[row sep=crcr]{%
20	1.0024703346323e-06\\
40	3.19085029620858e-07\\
60	1.59602859817198e-07\\
80	9.7213336380797e-08\\
100	6.60789259976369e-08\\
};
\addlegendentry{CEQR2 G}

\addplot [color=blue, only marks, line width=1pt, mark size=3.5pt,mark=x, mark options={solid, blue}]
  table[row sep=crcr]{%
20	2.19619727737096e-07\\
40	3.20239492701546e-08\\
60	1.098343171968e-08\\
80	5.20906517920139e-09\\
100	2.93394386563506e-09\\
};
\addlegendentry{EQRF2 G (I)}

\addplot [color=magenta, only marks, line width=1pt, mark size=3.5pt, mark=o, mark options={solid, magenta}]
  table[row sep=crcr]{%
20	2.21060592831002e-07\\
40	3.24374891391699e-08\\
60	1.11837348271138e-08\\
80	5.32910504613683e-09\\
100	3.0147286889104e-09\\
};
\addlegendentry{EQRF2 G (F)}

\addplot [color=black, line width=0.8pt, forget plot]
  table[row sep=crcr]{%
20	1.64012448143213e-07\\
100	2.93394386563506e-09\\
};
\addplot [color=black, dashed, line width=0.8pt, forget plot]
  table[row sep=crcr]{%
20	1.10474102810435e-06\\
100	6.60789259976369e-08\\
};
\end{axis}

\end{tikzpicture}%
%
%
%
\begin{tikzpicture}

\begin{axis}[%
width=1.7in,
height=1.7in,
at={(0.758in,0.481in)},
scale only axis,
xmode=log,
xmin=0.01,
xmax=10,
xminorticks=true,
ymode=log,
ymin=1e-9,
ymax=1e-05,
yminorticks=true,
axis background/.style={fill=white},
legend style={legend cell align=left, align=left, draw=white!15!black},
xlabel = {Wall-clock time (s)},
ylabel = {Error}
]
\addplot [color=blue, mark=x, line width=1pt, mark size=3.5pt, mark options={solid, blue}]
  table[row sep=crcr]{%
0.02332698	2.19619727737096e-07\\
0.0434716400000001	3.20239492701546e-08\\
0.0641650300000001	1.098343171968e-08\\
0.08588043	5.20906517920139e-09\\
0.1088252	2.93394386563506e-09\\
};

\addplot [color=magenta, mark=o, line width=1pt, mark size=3.5pt, mark options={solid, magenta}]
  table[row sep=crcr]{%
1.485074	2.21060592831002e-07\\
2.907695	3.24374891391699e-08\\
4.37532000000001	1.11837348271138e-08\\
5.78190700000001	5.32910504613683e-09\\
7.24922400000001	3.0147286889104e-09\\
};

\addplot [color=red, mark=triangle, line width=1pt, mark size=3.5pt, mark options={solid, red}]
  table[row sep=crcr]{%
0.02096156	1.0024703346323e-06\\
0.04095637	3.19085029620858e-07\\
0.06248952	1.59602859817198e-07\\
0.08187862	9.7213336380797e-08\\
0.10561907	6.60789259976369e-08\\
};

\end{axis}

\end{tikzpicture}%
  \caption{Observed convergence rate of the error
    $\lVert y_N-y(T)\rVert_\infty$ (left) and
    work-precision diagram (right)
    for problem~\eqref{eq:heat}. The time marching schemes
    employ two Gauss quadrature nodes.
    We label CEQR2 G the classical exponential quadrature rule
    \eqref{eq:CEQR2}, EQRF2 G (I)
    the integral formulation of the proposed method \eqref{eq:ccint_int},
    and EQRF2 G (F) the fractional $\varphi$ functions formulation of the proposed
    method \eqref{eq:ccint_ml}.
    The slope of the solid line is $-2.5$, while
    of the dashed lines is $-1.75$.}
  \label{fig:homdirgauss}
\end{figure}

First of all, from the left plot we observe that the classical
exponential quadrature
rule CEQR2 G shows the expected order of convergence
$1+r$.
The proposed method EQRF2 clearly exhibits order of convergence $1+2r$,
both when using the integral formulation EQRF2~G~(I) and when
employing the fractional $\varphi$ functions formulation EQRF2~G~(F).
In particular, notice that the overall accuracy of the time marching
method is not influenced by the underlying technique employed to compute the
matrix functions.
On the other hand, in terms of computational cost, the advantage of EQRF2~G~(I)
compared to EQRF2~G~(F)
is clear (see the right plot in Figure~\ref{fig:homdirgauss}). Therefore, we
conclude that for this example the best performing approach is EQRF2~G~(I).

\section{Conclusions and future work}\label{sec:conc}
In this paper we presented a class of collocation-type exponential
integrators tailored 
for linear problems with time-dependent fractional sources. The proposed
schemes are analyzed in the context of strongly continuous
semigroups, and 
may reach higher order compared to classical exponential quadrature rules. The
performed numerical experiments match satisfactorily with the theoretical
investigations and clearly show the superiority of the approach.
As future work, we plan to study a more sophisticated technique to efficiently
compute the special instances of matrix Mittag--Leffler functions which arise
in
the proposed integrators EQRF$\nu$. Also, we plan to generalize the approach
for the time
integration of stiff semilinear PDEs. This would be useful, for instance, in
the context of diffusion equations with sublinear growth~\cite{WY12}.

\section*{Acknowledgements}
The authors are members of the Gruppo Nazionale
Calcolo Scientifico-Istituto Na\-zio\-na\-le di Alta Matematica (GNCS-INdAM).
Fabio Cassini holds a post-doc fellowship funded by INdAM.

\bibliographystyle{plain}
\bibliography{CC25}

\begin{thebibliography}{10}

\bibitem{CC24}
M.~Caliari and F.~Cassini.
\newblock Direction splitting of $\varphi$-functions in exponential integrators
  for $d$-dimensional problems in {K}ronecker form.
\newblock {\em J. Approx. Softw.}, 1(1):1, 2024.

\bibitem{CC24bis}
M.~Caliari and F.~Cassini.
\newblock A second order directional split exponential integrator for systems
  of advection--diffusion--reaction equations.
\newblock {\em J. Comput. Phys.}, 498:112640, 2024.

\bibitem{CCEO24}
M.~Caliari, F.~Cassini, L.~Einkemmer, and A.~Ostermann.
\newblock Accelerating {E}xponential {I}ntegrators to {E}fficiently {S}olve
  {S}emilinear {A}dvection-{D}iffusion-{R}eaction {E}quations.
\newblock {\em SIAM J. Sci. Comput.}, 46(2):A906--A928, 2024.

\bibitem{CM18}
B.~Cano and M.~J. Moreta.
\newblock Exponential {Q}uadrature {R}ules {W}ithout {O}rder {R}eduction for
  {I}ntegrating {L}inear {I}nitial {B}oundary {V}alue {P}roblems.
\newblock {\em SIAM J. Numer. Anal.}, 56(3):1187--1209, 2018.

\bibitem{C24}
F.~Cassini.
\newblock Efficient third-order tensor-oriented directional splitting for
  exponential integrators.
\newblock {\em Electron. Trans. Numer. Anal.}, 60:520--540, 2024.

\bibitem{C09}
E.~Chlebus.
\newblock An approximate formula for a partial sum of the divergent $p$-series.
\newblock {\em Appl. Math. Lett.}, 22:732--737, 2009.

\bibitem{DN24}
E.~Denich and P.~Novati.
\newblock On the computation of the {M}ittag-{L}effler function of fractional
  powers of accretive operators.
\newblock {\em Fract. Calc. Appl. Anal.}, 27:2964--2985, 2024.

\bibitem{ETL17}
L.~Einkemmer, M.~Tokman, and J.~Loffeld.
\newblock On the performance of exponential integrators for problems in
  magnetohydrodynamics.
\newblock {\em J. Comput. Phys.}, 330:550--565, 2017.

\bibitem{G15}
R.~Garrappa.
\newblock Numerical {E}valuation of {T}wo and {T}hree {P}arameter
  {M}ittag-{L}effler {F}unctions.
\newblock {\em SIAM J. Numer. Anal.}, 53(3):1350--1369, 2015.

\bibitem{GP11}
R.~Garrappa and M.~Popolizio.
\newblock Generalized exponential time differencing methods for fractional
  order problems.
\newblock {\em Comput. Math. Appl.}, 62:876--890, 2011.

\bibitem{GP15}
R.~Garrappa and M.~Popolizio.
\newblock Exponential {Q}uadrature {R}ules for {L}inear {F}ractional
  {D}ifferential {E}quations.
\newblock {\em Mediterr. J. Math.}, 12:219--244, 2015.

\bibitem{GP18}
R.~Garrappa and M.~Popolizio.
\newblock Computing the {M}atrix {M}ittag-{L}effler {F}unction with
  {A}pplications to {F}ractional {C}alculus.
\newblock {\em J. Sci. Comput.}, 77:129--153, 2018.

\bibitem{H81}
D.~Henry.
\newblock {\em Geometric Theory of Semilinear Parabolic Equations}, volume 840
  of {\em Lecture Notes in Mathematics}.
\newblock Springer, 1981.

\bibitem{HL03}
M.~Hochbruck and C.~Lubich.
\newblock On {M}agnus {I}ntegrators for {T}ime--{D}ependent {S}chr\"odinger
  {E}quations.
\newblock {\em SIAM J. Numer. Anal.}, 41(3):945--963, 2003.

\bibitem{HO05}
M.~Hochbruck and A.~Ostermann.
\newblock Exponential {R}unge--{K}utta methods for parabolic problems.
\newblock {\em Appl. Numer. Math.}, 53(2--4):323--339, 2005.

\bibitem{HO10}
M.~Hochbruck and A.~Ostermann.
\newblock Exponential integrators.
\newblock {\em Acta Numer.}, 19:209--286, 2010.

\bibitem{KT05}
A.-K. Kassam and L.~N. Trefethen.
\newblock Fourth-{O}rder {T}ime-{S}tepping for {S}tiff {PDE}s.
\newblock {\em SIAM J. Sci. Comput.}, 26(4):1214--1233, 2005.

\bibitem{KLS24}
K.~Kropielnicka, K.~Lademann, and K.~Schratz.
\newblock Effective highly accurate time integrators for linear
  {K}lein--{G}ordon equations across the scales.
\newblock {\em J. Numer. Math.}, 2024.
\newblock Published online September 11, 2024.

\bibitem{LYL22}
D.~Li, S.~Yang, and J.~Lan.
\newblock Efficient and accurate computation for the $\varphi$-functions
  arising from exponential integrators.
\newblock {\em Calcolo}, 59(11):1--24, 2022.

\bibitem{LPR19}
V.~T. Luan, J.~A. Pudykiewicz, and D.~R. Reynolds.
\newblock Further development of efficient and accurate time integration
  schemes for meteorological models.
\newblock {\em J. Comput. Phys.}, 376:817--837, 2019.

\bibitem{P83}
A.~Pazy.
\newblock {\em Semigroups of Linear Operators and Applications to Partial
  Differential Equations}, volume~44 of {\em Applied Mathematical Science}.
\newblock Springer, 1983.

\bibitem{P99}
I.~Podlubny.
\newblock {\em Fractional {D}ifferential {E}quations}, volume 198 of {\em
  Mathematics in Science and Engineering}.
\newblock Academic Press Inc., San Diego, first edition, 1999.

\bibitem{SW09}
B.~Skaflestad and W.~M. Wright.
\newblock The scaling and modified squaring method for matrix functions related
  to the exponential.
\newblock {\em Appl. Numer. Math.}, 59(3--4):783--799, 2009.

\bibitem{WY12}
L.~Wang and J.~Yin.
\newblock Grow-up rate of solutions for the heat equation with a sublinear
  source.
\newblock {\em Bound. Value Probl.}, 96, 2012.

\end{thebibliography}

\end{document}